\numberwithin{equation}{section} %
\theoremstyle{plain}
   \newtheorem{thm}{\hspace{\parindent}{\sc Theorem}}[section] %
   \newtheorem{pro}[thm]{\hspace{\parindent}Proposition}
   \newtheorem{lem}[thm]{\hspace{\parindent}Lemma}
\theoremstyle{remark} %
   \newtheorem{rem}{\hspace{\parindent}Remark}[section] %
\newcommand{\adots}{a = 0,\pm1,\pm2,\dots}
\newcommand{\Atxrho}{A(t,x;\rho)}
\newcommand{\Aktxrho}{A^{(k)}(t,x;\rho)}
\newcommand{\bR}{\mathbb{R}}
\newcommand{\Cinfty}{C^{\infty}}
\newcommand{\domain}{[0,T]\times \mathbb{R}^d}
\newcommand{\domains}{[0,T]\times \mathbb{R}^{2d}}
\newcommand{\Eta}{\mathcal{E}_t^0([0,T];B^a)}
\newcommand{\Etaprime}{\mathcal{E}_t^0([0,T];B'^a)}
\newcommand{\hdbar}{\hspace{0.08cm}\dbar}
\newcommand{\Hepsilon}{H_{\epsilon}}
\newcommand{\jdots}{j = 1,2,\dots,d}
\newcommand{\qepsilon}{q_{\epsilon}}
\newcommand{\Qepsilon}{Q_{\epsilon}}
\newcommand{\rittaire}{\text{\rm{Re\hspace{0.05cm}}}}
\newcommand{\Sspace}{\mathcal{S}(\bR^d)}
\newcommand{\sumjd}{\sum_{j=1}^d}
\newcommand{\txdx}{(t,X,D_x)}
\newcommand{\txxi}{(t,x,\xi)}
\newcommand{\tzdz}{(t,Z,D_z)}
\newcommand{\uepsilon}{u_{\epsilon}}
\newcommand{\utrho}{u(t;\rho)}
\newcommand{\Vtxrho}{V(t,x;\rho)}
\newcommand{\Vktxrho}{V_k(t,x;\rho)}
\newcommand{\Wijtxrho}{W_{ij}(t,x;\rho)}
\newcommand{\weight}{\left(<\xi>^2 + <x>^{2(M+1)}\right)}
\newcommand{\wtrho}{w(t;\rho)}
\newcommand{\zeo}{0 < \epsilon \leq 1}
\def\dbar{{\mathchar'26\mkern-12mud}}
\begin{document}

\title{On the Schr\"odinger equations with time-dependent potentials  growing  polynomially in the spatial direction}
\author{Wataru Ichinose\thanks{Corresponding author. This research is  partially supported by JSPS KAKENHI grant No.26400161 and Shinshu Univerisity. E-mail: ichinose@math.shinshu-u.ac.jp
}\ , \and Takayoshi Aoki\thanks{This research is  partially supported by Shinshu University RA grant No.25204513. E-mail: 13sm101h@shinshu-u.ac.jp}
}
\date{}
\maketitle
\begin{quote}
{\small Department of Mathematical Sciences, Shinshu University,
Matsumoto 390-8621, Japan}%
\end{quote}\par
\begin{abstract}
\noindent
 The  Cauchy problem for the Schr\"odinger equations is studied with time-dependent potentials  growing  polynomially in the spatial direction.  First the existence and the uniqueness of solutions are shown in the weighted Sobolev spaces.  In addition, we suppose that our potentials are depending on a parameter.  
Secondly  it is shown  that if potentials depend continuously and differentiably on the parameter,  the solutions to the Schr\"odinger equations respectively become  continuous and differentiable with respect to its parameter. 
 \end{abstract}
 {\bf Keywords} Schr\"odinger equations; time-dependent potentials; polynomially growing  potentials in the spatial direction; continuity and differentiability with respect to a parameter.
 \\
{\bf AMS Subject Classification (2010)}  35Q41; 35Q40.
\section{Introduction}
	Let $T > 0$ be an arbitrary constant.
	We will  study the Schr\"odinger equations
\begin{align} \label{1.1}
& i\hbar \frac{\partial u}{\partial t}(t)  = H(t)u(t)\notag\\ 
& := \left[ \frac{1}{2m}\sum_{j=1}^d
      \left(\frac{\hbar}{i}\frac{\partial}{\partial x_j} - qA_j(t,x)\right)^2 + qV(t,x)\right]u(t),
\end{align}
where $t \in [0,T], x = (x_1,\dotsc,x_d) \in \mathbb{R}^d$, $ \bigl(V(t,x),A(t,x)\bigr) = (V,A_1,A_2,\dotsc,A_d) \in \bR^{d+1}$ are  electromagnetic potentials,  $\hbar$ is the Planck constant, $m > 0$ is the mass of a particle and $q \in \bR$ is its charge.  For the sake of simplicity we suppose $\hbar = 1$ and $q = 1$ hereafter. \par
In the present paper we will consider scalar potentials $V(t,x)$ that are time-dependent and growing  polynomially in $\bR^d_x$.  That is,  
\begin{equation} \label{1.2}
C_0<x>^{2(M+1)} - C_1 
 \leq V(t,x) \leq C_2<x>^{2(M+1)}
 \end{equation}
in $\domain$ with constants $M  \geq 0, C_0 > 0, C_1 \geq 0$ and $C_2 > 0$, where $|x| = \left(\sumjd x_j^2\right)^{1/2}$ and $<x>  = \left(1 + |x|^2\right)^{1/2}$. We denote by $L^2 = L^2(\mathbb{R}^d)$  the space of all square integrable functions on
$\mathbb{R}^d$ with inner
product $(f,g) := \int f(x) g(x)^*dx$ and  norm $\Vert f\Vert$,  where $g^*$ is the complex conjugate of $g$. For a multi-index
$\alpha = (\alpha_1,\dotsc,\alpha_d)$  we  write $|\alpha| = 
\sum_{j=1}^d
\alpha_j$, $\partial_{x_j} = \partial /\partial
x_j$ and  $\partial_x^{\alpha} = \partial_{x_1}^{\alpha_1}
\cdots \partial_{x_d}^{\alpha_d}$. 
For $M \geq 0$ in \eqref{1.2} let us introduce the weighted Sobolev spaces 
\begin{align} \label{1.3}
B^a(\mathbb{R}^d)  := \{f \in & L^2(\mathbb{R}^d);
 \|f\|_a := \|f\| + \notag\\
& \sum_{|\alpha| \leq  2a} \|\partial_x^{\alpha}f\| +
\|<\cdot>^{2a(M+1)}f\| < \infty\}
 \end{align}
for $a = 1,2,\dotsc$.  We denote the dual space of $B^a\ (a = 1,2,\dotsc)$ by $B^{-a}$ and the $L^2$ space by $B^0$.  \par
  The main aim in the present paper is to prove that for any $u_0 \in B^a\ (a =0, \pm1,\pm2,\dots)$ there exists the unique solution $u(t) \in \mathcal{E}^0_t([0,T];B^a) \cap \mathcal{E}^1_t([0,T]; \\
  B^{a-1}) $ with $u_0$ at $t = 0$ to \eqref{1.1}, where ${\cal E}^j_t([0,T];B^a)\ (j = 0,1,\dotsc)$ denotes the space of all $B^a$-valued j-times continuously differentiable functions on $[0,T]$. 
  \par
  Our results above will be applied in \cite{Ichinose 2017} to the proof of the convergence of the Feynman path integrals for the Schr\"odinger equations \eqref{1.1} with potentials growing polynomially in $\bR^d_x$, i.e. satisfying \eqref{1.2}.  The proof of the convergence of the Feynman path integrals for such Sch\"odinger equations has been hardly obtained (cf. \S 10.2 in \cite{Albeverio et all}).  It should be noted that  the existence and the uniqueness of  solutions  not only in $L^2$ but also in $B^1$ have been necessary to prove the convergence in the $L^2$ space of the Feynman path integrals as seen in \cite{Ichinose 1999, Ichinose 2014}.   \par
  The results of the existence and the uniqueness of solutions to \eqref{1.1} will be extended for  the multi-particle systems.  For the sake of simplicity we will consider  the 4-particle systems
\begin{align} \label{1.4}
 i\frac{\partial u}{\partial t}(t) & = H(t)u(t)
 := \Biggl[\sum_{k=1}^4 \Biggl\{\frac{1}{2m_k}
\sum_{j=1}^d
      \left(\frac{1}{i}\frac{\partial}{\partial x_j^{(k)}} - A_j^{(k)}(t,x^{(k)})\right)^2 + 
      \notag\\  
   &   V_k(t,x^{(k)})\Biggr\}
     + \sum_{1\leq i <j \leq 4}W_{ij}(t,x^{(i)}-x^{(j)})\Biggr]u(t) \notag \\
     & \equiv \Biggl[\sum_{k=1}^4 H_k(t) + \sum_{1\leq i <j \leq 4}W_{ij}(t,x^{(i)}-x^{(j)})\Biggr]u(t),
\end{align}
where $x^{(k)} \in \bR^d\ (k = 1,2,3,4).$   Our results in the present paper for the 4-particle systems will be easily extended for the general multi-particle systems.
\par
	In addition,  we suppose that the potentials of \eqref{1.1} and \eqref{1.4}  are depending on  a parameter. The second aim in the present paper is to prove that if potentials depend continuously and differentiably on the parameter, the solutions to \eqref{1.1} and \eqref{1.4} respectively become  continuous and  differentiable with respect to its parameter   in ${\cal E}^0_t([0,T];B^a)\ (a = 0,\pm1,\pm2,\dots)$.  Such  results have been well known in the theory of ordinary differential equations as the fundamental ones.	\par
	When the Hamiltonian $H(t)$ is independent of $t \in [0,T]$, i.e. $H(t) = H$,  the existence and the uniqueness of solutions in the $L^2$ space to \eqref{1.1} and \eqref{1.4}  are equivalent to the self-adjointness of $H$ (cf. \S 8.4 in \cite{Reed-Simon I}).  The self-adjointness of $H$ in $L^2$ has almost been settled now (cf. \cite{Cycon et all, Leinfelder et all, Reed-Simon II}), as stated in the introductions of \cite{Yajima 1991,Yajima 2011,Yajima 2016}. 
	\par
	It should also be noted that the Hamiltonian $H_0 = - \sumjd \partial_{x_j}^2 - a|x|^b$  with constants $a > 0$ and $b > 2$ on $\Cinfty_0(\bR^d)$ is not essentially self-adjoint in $L^2$ (cf. pp. 157-159 in \cite{Berezin et all}), $H_0$ has equal deficiency (cf. Theorem X.3 in \cite{Reed-Simon II}) and so $H_0$ has an infinite number of  different self-adjoint extensions in $L^2$ from  Theorem X.2 and its corollary in \cite{Reed-Simon II}, where  $\Cinfty_0(\bR^d)$ denotes the space of all infinitely differentiable functions with compact support on $\bR^d$.  Consequently, as well known (cf. Theorem VIII.7 in \cite{Reed-Simon I}), the Stone theorem shows that the uniqueness of solutions to \eqref{1.1} with $H(t) = H_0$ doesn't always hold in $L^2$. \par
	If $H(t)$ is not independent of $t \in [0,T]$, the problem is never simple.  In \cite{Yajima 1991} Yajima has proved the existence and the uniqueness of solutions to \eqref{1.1} in $B^a\ (a = 0,\pm1,\pm2,\dots)$ with $M = 1$ under the assumptions
	\begin{align} \label{1.5}
	&  |\partial^{\alpha}_xV(t,x)| \leq C_{\alpha}, \ |\alpha| \geq 2, \notag\\
	& \sumjd \bigl(|\partial^{\alpha}_xA_j(t,x)| + |\partial^{\alpha}_x\partial_tA_j(t,x)|\bigr)
	 \leq C_{\alpha},\ |\alpha| \geq 1, \notag\\
	 & \sum_{1 \leq j < k \leq d}|\partial^{\alpha}_xB_{jk}(t,x)| \leq C_{\alpha}<x>^{-(1+\delta_{\alpha})},\ |\alpha| \geq 1
	\end{align}
	with constants $\delta_{\alpha} > 0$ and $C_{\alpha} \geq 0$ by  the theory of Fourier integral operators, where  $B_{jk} =  \partial A_k/\partial x_j  -\partial A_j/\partial x_k. $  In the present paper we often use symbols $C, C_{\alpha}, C_{\alpha,\beta}$, $C_a, \delta$ and $\delta_{\alpha}$ to write down constants, though these values are different in general. 
 In \cite{Ichinose 1995} the author has proved the existence and the uniqueness of solutions in $B^a\ (a = 0,\pm1,\pm2,\dots)$ with $M = 1$ under the assumptions 
	\begin{equation} \label{1.6}
	  |\partial^{\alpha}_xV(t,x)| \leq C_{\alpha}<x>,\ |\alpha| \geq 1,
	\end{equation}
	\begin{equation} \label{1.7}
 \sumjd |\partial^{\alpha}_xA_j(t,x)|
	 \leq C_{\alpha},\ |\alpha| \geq 1\end{equation}
	by the energy method.  Recently, general results about the existence and the uniqueness of solutions to \eqref{1.1} in the $L^2$ space have been obtained in \cite{Yajima 2011} by the semi-group method.  In \cite{Yajima 2016}  results  in $B^a\ (a = 0,1)$ with $M=1$ for  the multi-particle systems, e.g. \eqref{1.4}  have been obtained with singular potentials $W_{ij}$ under the assumptions \eqref{1.5} for $(V_k,A^{(k)})$ by the theories of semi-groups and Fourier integral operators.\par
	It should be noted that our results in the present paper and in addition even the results in \cite{Yajima 2011} are not enough to study the equations \eqref{1.1} in a general way.  For example, both of these results can not be applied to the simple equations \eqref{1.1} with $V = a(t)|x|^4 + |x|^2$ and $A = 0$ where $a(0) = 0$ and $a(t) > 0 \ (t \in (0,T])$, as mentioned in Remark 2.1 of the present paper. \par
	Next let us consider the Schr\"odinger equations with potentials dependent on a parameter.  When the Hamiltonian $H(t)$ is independent of $t \in [0,T]$, it follows from Theorems VIII. 21 and VIII. 25 in \cite{Reed-Simon I} that if potentials are continuous with respect to its parameter, in the $L^2$ space so are the solutions to \eqref{1.1} and \eqref{1.4}.  \par
	If  $H(t)$ is not independent of $t \in [0,T]$, the problem is never simple like the existence and the uniqueness of solutions.  In \cite{Ichinose 2012} the author has proved that if  potentials  depend continuously and differentiably on a parameter under the assumptions \eqref{1.6} and \eqref{1.7}, the solutions to \eqref{1.1} respectively become  continuous and  differentiable  with respect to its parameter in $\mathcal{E}_t^0([0,T];B^a)\ (a = 0,1,2,\dots)$ with $M =1$. \par
	All proofs of our results will be given by the energy method as in \cite{Ichinose 1995} and \cite{Ichinose 2012}.  The crucial point in the proofs of our results for \eqref{1.1} is to introduce a family of bounded operators $\big\{H_{\epsilon}(t)\bigr\}_{0 <\epsilon \leq 1}$ in $B^a\ (a = 0,\pm1,\pm2,\dots)$ by (4.1) as an approximation of $H(t)$ in \eqref{1.1}.  Then we can prove Proposition 4.2, by which we can complete the proofs of our results for \eqref{1.1} as in \cite{Ichinose 1995} and \cite{Ichinose 2012}.  In the same way the crucial point in the proofs of our results for \eqref{1.4} is to introduce
 $\big\{H_{\epsilon}(t)\bigr\}_{0 <\epsilon \leq 1}$  by \eqref{5.29}	as an approximation of $H(t)$ in \eqref{1.4}.  As in the proofs for \eqref{1.1} we can complete the proofs for \eqref{1.4}. 
 We note that the results in the present paper for the 4-particle systems \eqref{1.4} give generalizations of those for \eqref{1.1} in the present paper and  \cite{Ichinose 1995,Ichinose 2012}.
 \par
 	The plan of the present paper is as follows.  In \S 2 we will state all theorems.  \S 3 is devoted to preparing for the proofs of the theorems for \eqref{1.1}.  In \S 4 we will prove all theorems for \eqref{1.1}.  In \S 5  we will prove all theorems for \eqref{1.4}.
\section{Theorems}
{\bf Assumption 2.1. } We assume for all $\alpha$ and $k = 0,1$ that $\partial_x^{\alpha}\partial_t^kV(t,x)$ and $\partial_x^{\alpha}\partial_t^kA_j(t,x)\ (j = 1,2,\dots,d)$ are continuous in $\domain$. In $\domain$ we assume  \eqref{1.2} with 
constants $M \geq 0, C_0 > 0, C_1 \geq 0$ and $C_2 > 0$, and the following for $\jdots$.  We have
\begin{equation} \label{2.1}
|\partial_x^{\alpha}V(t,x)| \leq C_{\alpha}<x>^{2(M+1)}, |\alpha| \geq 1,
\end{equation}
\begin{equation} \label{2.2}
 |\partial_x^{\alpha}\partial_tV(t,x)| \leq C_{\alpha}<x>^{2(M+1)}\end{equation}
for all $\alpha$,
\begin{equation} \label{2.3}
 |A_j(t,x)| \leq C<x>^{M+1-\delta}
 \end{equation}
with a constant $\delta > 0$,
\begin{equation} \label{2.4}
|\partial_x^{\alpha}A_j(t,x)| \leq C_{\alpha}<x>^{M+1}, |\alpha| \geq 1
 \end{equation}
and 
\begin{equation} \label{2.5}
 |\partial_x^{\alpha}\partial_tA_j(t,x)| \leq C_{\alpha}<x>^{M+1}
 \end{equation}
for all $\alpha$.  \vspace{0.3cm}\par
	Let $B^a$ be the weighted Sobolev spaces introduced in \S 1.
	\begin{thm}
	Under Assumption 2.1 for any $u_0 \in B^a\ (\adots)$ there exists the unique solution $u(t) \in \mathcal{E}_t^0([0,T];B^a) \cap \mathcal{E}_t^1([0,T];B^{a-1})$ with $u(0) = u_0$ to \eqref{1.1}.  This solution $u(t)$ satisfies 
\begin{equation} \label{2.6}
 \Vert u(t) \Vert_a \leq C_a \Vert u_0 \Vert_a\quad (0 \leq t \leq T)
 \end{equation}
and in particular
\begin{equation} \label{2.7}
 \Vert u(t) \Vert =  \Vert u_0 \Vert \quad (0 \leq t \leq T).
 \end{equation}
	\end{thm}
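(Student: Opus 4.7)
The plan is to follow the energy-method strategy the authors announce in the introduction: approximate $H(t)$ by a family of bounded operators, solve the approximate Cauchy problems, derive uniform estimates, and pass to the limit. Specifically, I would first regularize $H(t)$ (for instance by Friedrichs mollification combined with a cutoff in $x$, arranged so that the resulting operator remains symmetric on $L^2$) to obtain a family $H_\epsilon(t)$, $0<\epsilon\leq 1$, of operators bounded on each $B^a$ and continuous in $t$. Picard iteration in the Banach space $B^a$ then produces a unique $u_\epsilon \in \mathcal{E}^1_t([0,T];B^a)$ solving $i\partial_t u_\epsilon = H_\epsilon(t) u_\epsilon$, $u_\epsilon(0)=u_0$.

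The heart of the argument is the uniform estimate $\|u_\epsilon(t)\|_a \leq C_a\|u_0\|_a$, independent of $\epsilon$. For $a=0$, symmetry of $H_\epsilon$ on $L^2$ forces $\frac{d}{dt}\|u_\epsilon\|^2 = 0$, so \eqref{2.7} holds at the approximate level. For $a\geq 1$, I would apply $\partial_x^\beta$ and the weight $\langle x\rangle^{2k(M+1)}$ to the equation (with $|\beta|\leq 2a$, $k\leq a$), take $L^2$ inner product with the corresponding weighted-derivative of $u_\epsilon$, and bound the commutators $[H_\epsilon,\partial_x^\beta]$ and $[H_\epsilon,\langle x\rangle^{2k(M+1)}]$ by a constant multiple of $\|u_\epsilon\|_a$, closing by Gronwall; this is presumably the content of the promised Proposition 4.2. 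Weak-$*$ compactness in $L^\infty([0,T];B^a)$ combined with the equation-derived bound on $\partial_t u_\epsilon$ in $B^{a-1}$ gives, along a subsequence, strong convergence in $\mathcal{E}^0_t([0,T];B^{a-1})$, so the limit $u$ solves \eqref{1.1} in that space; a standard weak-continuity argument promotes $u \in \mathcal{E}^0_t([0,T];B^a)$, and \eqref{2.6} passes to the limit by lower semicontinuity. Uniqueness for $a\geq 1$ follows by applying \eqref{2.7} to the difference of two solutions; for $a\leq -1$ it follows by duality, pairing $u(t)\in B^a$ against a $B^{-a}$-solution of the same Schr\"odinger equation run from a terminal datum at $t$, which exists by the positive-order theory.

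The principal obstacle is closing the commutator estimate uniformly in $\epsilon$ when $V$ grows like $\langle x\rangle^{2(M+1)}$ and the magnetic first-order term behaves like $\langle x\rangle^{M+1-\delta}\,\partial$. In the $B^a$ scale a derivative and a factor of $\langle x\rangle^{M+1}$ each cost a half-order, so without the decay gain $\delta$ in \eqref{2.3} the magnetic commutator would sit exactly at the top of $B^a$ and the energy inequality would not close. The regularization $H_\epsilon(t)$ must therefore be chosen so that these lower-order gains persist uniformly in $\epsilon$, which is where the substantive work for the specific $H_\epsilon$ defined in (4.1) lies; once that is in hand, the rest of Theorem 2.1 reduces to the structural steps sketched above.
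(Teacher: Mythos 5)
Your overall architecture coincides with the paper's: approximate $H(t)$ by bounded symmetric operators, solve by Picard iteration, prove an $\epsilon$-uniform $B^a$ bound, pass to the limit by compactness (the paper uses Ascoli--Arzel\`a with the compact embedding $B^{a+1}\hookrightarrow B^a$ and then a density argument in $u_0$, rather than weak-$*$ compactness, but this is a cosmetic difference), prove uniqueness for $a\le 0$ by duality against a backward inhomogeneous solution, and recover $\|u(t)\|=\|u_0\|$ in the limit. The gap is in the step you yourself identify as the heart of the matter: the uniform a priori estimate. You propose to commute $H_\epsilon$ with $\partial_x^\beta$ and with $\langle x\rangle^{2k(M+1)}$ and to bound these commutators by $C\|u_\epsilon\|_a$. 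That bound is false under Assumption 2.1: for instance $[\partial_x^\beta,V]u$ with $|\beta|=2a$ contains $\partial_{x_j}V\,\partial_x^{\beta-e_j}u$, and since \eqref{2.1} only gives $|\partial_{x_j}V|\le C\langle x\rangle^{2(M+1)}$ with no gain, the quantity $\|\langle x\rangle^{2(M+1)}\partial_x^{2a-1}u\|$ is of order $a+\tfrac12$ in the $B^a$ scale (its symbol is $O\bigl((\langle\xi\rangle^2+\langle x\rangle^{2(M+1)})^{a+1/2}\bigr)$), not of order $a$; the same half-order loss occurs for $[H_\epsilon,\langle x\rangle^{2k(M+1)}]$ through the kinetic term. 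So the Gronwall inequality does not close, and no integration by parts on the real part of the energy identity repairs it, because your energy omits the mixed weighted-derivative terms whose coefficients would be needed for cancellation.

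The paper's resolution --- and the actual content of Proposition 4.2 --- is to measure the $B^a$ norm by $\|\Lambda(t)^a u\|_{L^2}$ with $\Lambda(t)=\mu+H(t)$ itself (this equivalence is Propositions 3.3 and 3.5), so that the dangerous commutator $[\Lambda(t)^a,H(t)]$ vanishes identically. What remains to control is $[\Lambda(t)^a,H_\epsilon(t)]$ and $\partial_t\Lambda(t)\cdot\Lambda(t)^{-a}$, and this is where the \emph{specific} regularization matters: the paper takes $H_\epsilon=X_\epsilon^\dagger H X_\epsilon$ with $X_\epsilon$ the pseudo-differential operator with symbol $\chi\bigl(\epsilon(\mu+h(t,x,\xi))\bigr)$, a cutoff adapted to the full Hamiltonian symbol, so that $[\Lambda(t),H_\epsilon(t)]$ reduces to terms involving $[X_\epsilon,\Lambda]$, whose symbol's leading Poisson bracket $\{h,h_s\}$ collapses to derivatives of $\nabla\cdot A$ and is uniformly bounded (Lemma 3.4). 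A Friedrichs mollifier combined with a spatial cutoff, as you suggest, is symmetric and bounded but has no reason to commute with $\Lambda(t)^a$ up to uniformly bounded errors, so the estimate would again fail to close. Finally, the decay gain $\delta>0$ in \eqref{2.3} is used not for the magnetic commutator but for the ellipticity lower bound \eqref{3.1} (Lemma 3.1), which is what allows $\mu+h_s$ to be inverted and $\Lambda(t)^a$ to define the $B^a$ norm in the first place.
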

	\begin{rem}
	Let $a(t)$ be a continuous function on $[0,T]$ such that $a(0) = 0$ and $a(t) > 0\ (0 < t \leq T)$.  Since $V := a(t)|x|^4 + |x|^2$ does not satisfy  \eqref{1.2}  for any $M \geq 0$, Theorem 2.1 can not be applied to \eqref{1.1} with $H(t) := (1/2m)\sumjd (-i\partial_{x_j})^2 +a(t)|x|^4 + |x|^2$.  In addition, Theorems 1.2 and 1.4 in \cite{Yajima 2011} can not be applied either, because these self-adjoint operators $H(t)\ (0 \leq t \leq T)$ in $L^2(\bR^d)$ don't have a common domain.
	\end{rem}
	Next, let us consider the Schr\"odinger equations \eqref{1.1} with potentials $(V(t,x;\rho), \\
	A(t,x;\rho))$  dependent on a parameter $\rho \in \mathcal{O}$, where $\mathcal{O}$ is an open set in $\bR$.
	\begin{thm}
	We suppose that $(\Vtxrho,\Atxrho)$ for all $\rho \in \mathcal{O}$ satisfy Assumption 2.1 and have the uniform estimates \eqref{1.2} and \eqref{2.1} - \eqref{2.5}  with respect to $\rho \in \mathcal{O}$. 
	In addition, we assume that $\partial_x^{\alpha}\Vtxrho$ and  $\partial_x^{\alpha}A_j(t,x;\rho)\ (\jdots)$  for all $\alpha$ are continuous in $[0,T]\times \bR^d \times \mathcal{O} $. 
	 Let $u_0 \in B^a \ (\adots)$ be independent of $\rho$ and $u(t;\rho)$
	the solutions to \eqref{1.1} with $u(0;\rho) = u_0$ determined in Theorem 2.1.  Then, the mapping : $\mathcal{O} \ni
\rho	\rightarrow \utrho \in \Eta$ is continuous, where the norm in $\Eta$ is $\max_{0 \leq t \leq T}\Vert f(t)\Vert_a.$
	\end{thm}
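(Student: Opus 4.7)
Fix $\rho_0 \in \mathcal{O}$ and study the difference
\[
w(t;\rho) := \utrho - u(t;\rho_0).
\]
Subtracting the two instances of \eqref{1.1} satisfied by $\utrho$ and $u(t;\rho_0)$, we see that $w$ obeys the inhomogeneous Schr\"odinger equation
\[
i \partial_t w = H(t;\rho) w + F(t;\rho), \qquad w(0;\rho) = 0,
\]
with $F(t;\rho) := \bigl(H(t;\rho) - H(t;\rho_0)\bigr) u(t;\rho_0)$. Because the $\rho$-dependence of $H(t;\rho)$ enters only through $\Vtxrho$ and $\Atxrho$, the operator $H(t;\rho) - H(t;\rho_0)$ is still of second order in $\partial_x$ with coefficients growing at most as $<x>^{2(M+1)}$ uniformly in $\rho \in \mathcal{O}$; consequently it maps $B^{b+1}$ into $B^{b}$ boundedly, uniformly in $\rho$, for every integer $b$.

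The core analytic ingredient is an energy estimate for this inhomogeneous equation in $B^a$. Working first with the bounded-operator regularizations $\Hepsilon(t;\rho)$ introduced in Section 4 and then passing to $\epsilon \to 0$ via the $\epsilon$-uniform bounds of Proposition 4.2, I extend the proof of Theorem 2.1 to obtain
\[
\sup_{t \in [0,T]} \|w(t;\rho)\|_a \leq C_a \int_0^T \|F(s;\rho)\|_a\, ds
\]
with $C_a$ independent of $\rho \in \mathcal{O}$. Assume first that $u_0 \in B^{a+1}$; then Theorem 2.1 applied at level $a+1$ gives $u(\cdot;\rho_0) \in \mathcal{E}_t^0([0,T]; B^{a+1})$, so $F(t;\rho) \in B^a$. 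The joint continuity of $\partial_x^\alpha \Vtxrho$ and $\partial_x^\alpha A_j(t,x;\rho)\ (\jdots)$ in $\rho$, together with the $\rho$-uniform polynomial majorants supplied by \eqref{1.2} and \eqref{2.1}--\eqref{2.5}, lets Lebesgue's dominated convergence theorem give $\sup_{t \in [0,T]} \|F(t;\rho)\|_a \to 0$ as $\rho \to \rho_0$. Combined with the inhomogeneous energy inequality, this yields continuity of $\rho \mapsto \utrho$ in $\Eta$ whenever $u_0 \in B^{a+1}$.

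For general $u_0 \in B^a\ (\adots)$, I conclude by density. Choose a sequence $u_0^{(n)} \in B^{a+1}$ with $\|u_0^{(n)} - u_0\|_a \to 0$, and let $u^{(n)}(t;\rho)$ be the solutions furnished by Theorem 2.1. Linearity together with the uniform bound \eqref{2.6} applied to $u - u^{(n)}$ gives
\[
\sup_{\rho \in \mathcal{O}}\ \sup_{t \in [0,T]} \|u^{(n)}(t;\rho) - \utrho\|_a \leq C_a \|u_0^{(n)} - u_0\|_a,
\]
so a standard three-$\varepsilon$ argument, combined with the continuity of $\rho \mapsto u^{(n)}(\cdot;\rho)$ established above, completes the proof. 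I expect the main obstacle to lie in the smoother-data step: making the smallness $\sup_t \|F(t;\rho)\|_a \to 0$ genuinely uniform in $t \in [0,T]$ requires tracking each $\partial_x^\alpha$ and each weight $<x>^{2a(M+1)}$ in the $B^a$-norm against the pointwise-continuous coefficients of $H(t;\rho) - H(t;\rho_0)$, and invoking the $\rho$-uniform polynomial bounds to produce the integrable majorants needed for dominated convergence; for negative $a$ this must be carried out in the dual pairing, which is precisely where the advantage of starting from the $\Hepsilon$-regularization becomes essential.
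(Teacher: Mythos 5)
Your argument is correct in outline but follows a genuinely different route from the paper's. The paper never estimates the forcing term $F(t;\rho)=\bigl(H(t;\rho)-H(t;\rho_0)\bigr)u(t;\rho_0)$ at all: for $u_0\in B^{a+1}$ it shows that $\{\utrho\}_{\rho\in\mathcal{O}}$ is bounded in $\mathcal{E}_t^0([0,T];B^{a+1})$ and equicontinuous in $\Eta$, invokes the compactness of the embedding $B^{a+1}\hookrightarrow B^{a}$ (Lemma 4.4) together with Ascoli--Arzel\`a to extract a convergent subsequence along any $\rho_j\to\rho$, identifies the limit with $\utrho$ by passing to the limit in the integral equation (where only pointwise continuity of the coefficients in $\rho$ is needed, tested against a fixed $u(\theta)$) and by uniqueness, and then finishes with the same density step in $u_0$ that you use. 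Your Duhamel-type argument via the inhomogeneous energy estimate \eqref{4.17} is more quantitative --- it would yield a modulus of continuity for $\rho\mapsto\utrho$ in terms of that of the potentials --- but it concentrates all the difficulty in the claim $\sup_{t}\Vert F(t;\rho)\Vert_a\to 0$, which is precisely what the compactness argument sidesteps. Two cautions on that step. First, for $a\geq 0$ dominated convergence gives $\Vert F(t;\rho)\Vert_a\to 0$ only for each fixed $t$; to get uniformity you should combine the compactness of the orbit $\{u(t;\rho_0):t\in[0,T]\}$ in $B^{a+1}$ with the $\rho$-uniform boundedness of $H(t;\rho)-H(t;\rho_0):B^{a+1}\to B^{a}$ and argue by contradiction along sequences $(t_j,\rho_j)$. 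Second, for $a<0$ the elements of $B^{a+1}$ are distributions and dominated convergence does not act on the dual norm; the fix is not the $\Hepsilon$-regularization (that is an existence device and plays no role here) but the elementary observation that the operators $H(t;\rho)-H(t;\rho_0)$ are uniformly bounded from $B^{a+1}$ to $B^{a}$ and tend to zero on the dense subspace $\Sspace$, hence strongly, which again combines with compactness of the orbit. With these repairs, and granting the estimate \eqref{4.17} (which the paper derives only in the course of proving Theorem 2.3, but which follows from the machinery of Theorem 2.1 exactly as you indicate), your proof is complete.
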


	We set
\begin{equation} \label{2.8}
 h(t,x,\xi) :=  \frac{1}{2m}|\xi - A(t,x)|^2 + V(t,x).
 \end{equation}
Let $\Sspace$ denote the Schwartz space of all   rapidly decreasing functions on $\bR^d$ and $\chi \in \Sspace$ such that $\chi(0) = 1$.
Then, using the oscillatory integral, we can write $H(t)f$ in \eqref{1.1} for $f \in \Sspace$ as
\begin{align} \label{2.9}
& H(t)f = H\left(t,\frac{X+X'}{2},D_x\right)f  = \text{Os}-\iint e^{i(x-y)\cdot\xi}h\left(t,\frac{x+y}{2},\xi\right)f(y)dy\hspace{0.08cm}\dbar\xi \notag \\
& := \lim_{\epsilon\rightarrow 0} \iint e^{i(x-y)\cdot\xi}\chi(\epsilon \xi)h\left(t,\frac{x+y}{2},\xi\right)f(y)dy\hspace{0.08cm}\dbar\xi, \quad  \dbar\xi = (2\pi)^{-d}d\xi
 \end{align}
in the form of the pseudo-differential operator with the double symbol $h(t,(x+x')/2,\xi)$ (cf. \cite{Kumano-go}).
\begin{thm}
Besides the assumptions of Theorem 2.2 we suppose for all $\alpha$ and $\jdots$ that $\partial_{\rho}\partial^{\alpha}_x\Vtxrho$ and $\partial_{\rho}\partial^{\alpha}_xA_j(t,x;\rho)$ are continuous in $\domain \times \mathcal{O}$ and satisfy
\begin{equation} \label{2.10}
\sup_{\rho \in \mathcal{O}} |\partial_{\rho}\partial^{\alpha}_x\Vtxrho| \leq C_{\alpha}<x>^{2(M+1)}
\end{equation}
\begin{equation} \label{2.11}
 \sup_{\rho \in \mathcal{O}} |\partial_{\rho}\partial^{\alpha}_xA_j(t,x;\rho)| \leq C_{\alpha}<x>^{M+1}
 \end{equation}
in $\domain \times \mathcal{O}$.  Let $u_0 \in B^{a+1}\ (\adots)$ be independent of $\rho$ and $\utrho$ the solutions to \eqref{1.1} with $u(0) = u_0$.  Then, the mapping : $ \mathcal{O} \ni \rho \rightarrow \utrho \in \Eta$ is continuously differentiable with respect to $\rho$, we have
\begin{equation} \label{2.12}
\sup_{\rho \in \mathcal{O}} \Vert \partial_{\rho}\utrho\Vert_a \leq C_a \Vert u_0 \Vert_{a+1}\quad (0 \leq t \leq T)
 \end{equation}
and $\partial_{\rho}\utrho$ is the solution to
\begin{equation} \label{2.13}
 i\frac{\partial}{\partial t}\wtrho = H(t;\rho)\wtrho + \frac{\partial H(t;\rho)}{\partial\rho}\utrho
  \end{equation}
with $w(0) = 0$, where $\partial_{\rho}H(t;\rho)$ is the pseudo-differential operator with the double symbol $\partial_{\rho}h(t,(x+x')/2,\xi;\rho)$.
\end{thm}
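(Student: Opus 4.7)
The approach is to first construct a solution $\wtrho$ to the inhomogeneous Schr\"odinger equation \eqref{2.13} with $w(0;\rho)=0$, and then to identify $\wtrho$ with the $\rho$-derivative of $\utrho$ by showing that the difference quotient converges to $\wtrho$ in $\Eta$. Continuity of $\partial_\rho\utrho$ in $\rho$ will then be obtained by applying Theorem 2.2 to equation \eqref{2.13}.

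For the construction step, since $u_0\in B^{a+1}$, Theorem 2.1 (applied with $a+1$ in place of $a$) gives $\utrho\in \mathcal{E}_t^0([0,T];B^{a+1})$ with $\|\utrho\|_{a+1}\le C_{a+1}\|u_0\|_{a+1}$ uniformly in $\rho$. The bounds \eqref{2.10}--\eqref{2.11} imply that the double symbol $\partial_\rho h(t,(x+x')/2,\xi;\rho)$ obeys the same type of polynomial estimates as $h$, so by the pseudo-differential calculus used to realize \eqref{2.9} the operator $\partial_\rho H(t;\rho)$ maps $B^{a+1}$ continuously into $B^a$ with norm uniform in $(t,\rho)$. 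Hence the forcing $F(t;\rho):=\partial_\rho H(t;\rho)\utrho$ lies in $\Eta$. An inhomogeneous variant of the energy estimate behind Theorem 2.1 (with an $L^1_t(B^a)$ source handled by Duhamel's principle) then produces the unique $\wtrho\in \Eta\cap \mathcal{E}_t^1([0,T];B^{a-1})$ solving \eqref{2.13} with $w(0;\rho)=0$, and yields the bound \eqref{2.12}.

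For the identification step, set $v_h(t):=h^{-1}\bigl(u(t;\rho+h)-\utrho\bigr)-\wtrho$. A direct computation gives $v_h(0)=0$ and
\[
i\frac{\partial}{\partial t}v_h(t)=H(t;\rho)v_h(t)+G_h(t),
\]
where
\[
G_h(t):=R_h(t)u(t;\rho+h)+\partial_\rho H(t;\rho)\bigl(u(t;\rho+h)-\utrho\bigr)
\]
and $R_h(t):=h^{-1}\bigl(H(t;\rho+h)-H(t;\rho)\bigr)-\partial_\rho H(t;\rho)$. The inhomogeneous energy estimate gives $\|v_h(t)\|_a\le C_a\int_0^t\|G_h(s)\|_a\,ds$, so it suffices to show each summand of $G_h$ vanishes in $\Eta$. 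The second summand does so by Theorem 2.2 (continuity of $\rho\mapsto u(\cdot;\rho)$ in $\mathcal{E}_t^0([0,T];B^{a+1})$) combined with the uniform boundedness of $\partial_\rho H(t;\rho):B^{a+1}\to B^a$. For the first summand, Taylor's formula applied in the variable $\rho$ to $h(t,(x+x')/2,\xi;\rho)$, together with \eqref{2.10}--\eqref{2.11}, represents the double symbol of $R_h(t)$ as an $O(h)$ quantity in the same symbol class, so that $R_h(t):B^{a+1}\to B^a$ has operator norm tending to zero uniformly in $t$; since $\|u(t;\rho+h)\|_{a+1}$ is uniformly bounded by Theorem 2.1, the first summand also vanishes in $\Eta$.

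The principal technical obstacle is the estimate on $R_h(t)$ as an operator from $B^{a+1}$ to $B^a$, which requires care because the double symbol of $\partial_\rho H(t;\rho)$ has growth at the maximal order allowed by \eqref{2.10}--\eqref{2.11}; this is also the reason the stronger hypothesis $u_0\in B^{a+1}$ is imposed, since $\partial_\rho H(t;\rho)$ loses one order of regularity in the $B^a$-scale. Granted this estimate, the continuous differentiability follows since $\wtrho=\partial_\rho\utrho$, and the continuity of $\rho\mapsto \wtrho$ in $\Eta$ is obtained by applying Theorem 2.2 to the inhomogeneous equation \eqref{2.13} whose coefficients and source depend continuously on $\rho$ in the relevant norms.
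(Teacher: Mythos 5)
There is a genuine gap at the heart of your identification step, namely the claim that
\[
R_h(t):=h^{-1}\bigl(H(t;\rho+h)-H(t;\rho)\bigr)-\partial_\rho H(t;\rho)
\]
tends to zero in the operator norm of $\mathcal{L}(B^{a+1},B^a)$ uniformly in $t$. Your Taylor argument writes the symbol of $R_h$ as $\int_0^1\bigl(\partial_\rho h(t,\cdot;\rho+\theta h)-\partial_\rho h(t,\cdot;\rho)\bigr)d\theta$; to make this $O(h)$ in the symbol class you would need bounds on $\partial_\rho^2 h$, which are not assumed anywhere in Theorem 2.3. Even the weaker statement that the operator norm is $o(1)$ fails to follow from the hypotheses: what is assumed is only that $\partial_\rho\partial_x^\alpha V$ and $\partial_\rho\partial_x^\alpha A_j$ are \emph{continuous} on $\domain\times\mathcal{O}$ together with the $\rho$-uniform weighted bounds \eqref{2.10}--\eqref{2.11}. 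Continuity gives convergence of the difference of symbols to zero only locally uniformly in $x$; for $|x|$ large you control the difference only by $C<x>^{2(M+1)}$ (resp. $C<x>^{M+1}$), which after division by the weight $<\xi>^2+<x>^{2(M+1)}$ is bounded but not small. So the Calder\'on--Vaillancourt-type estimate you would need for $\Lambda^a R_h(t)\Lambda^{-(a+1)}$ does not go to zero. The same objection applies to your final step, where you invoke ``continuity of the coefficients and source in the relevant norms'' for equation \eqref{2.13}: norm continuity of $\rho\mapsto H(t;\rho)$ and $\rho\mapsto\partial_\rho H(t;\rho)$ as operators between these weighted spaces is exactly what the hypotheses do not provide.

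The paper's proof is organized precisely to avoid any such operator-norm convergence in $\rho$. It works with the difference quotients $w_\tau(t;\rho)=\tau^{-1}\bigl(u(t;\rho+\tau)-u(t;\rho)\bigr)$, which satisfy \eqref{4.19} exactly (no remainder $R_h$ appears, because the mean-value form $\int_0^1\partial_\rho H(t;\rho+\theta\tau)d\theta$ is kept as is), derives the uniform bound \eqref{4.20} from the energy inequality, and then, for $u_0\in B^{a+2}$, obtains a uniform $B^{a+1}$ bound and $B^a$-equicontinuity so that the compact embedding $B^{a+1}\hookrightarrow B^a$ (Lemma 4.4) and the Ascoli--Arzel\`a theorem yield a convergent subsequence; the limit is identified as the solution of \eqref{2.13} by uniqueness, and the case $u_0\in B^{a+1}$ is then recovered by approximating $u_0$ from $B^{a+2}$ and a three-term triangle inequality (estimates \eqref{4.22}--\eqref{4.23}). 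Continuity of $\wtrho$ in $\rho$ is handled by the same compactness-plus-uniqueness scheme, not by norm continuity of the Hamiltonian. Your construction of $\wtrho$ as the solution of the inhomogeneous problem and the use of $u_0\in B^{a+1}$ to absorb the one-order loss of $\partial_\rho H$ are consistent with the paper, but the convergence $w_\tau\to w$ must be run through compactness rather than through an operator-norm estimate on $R_h$.
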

	Now, we consider the 4-particle systems \eqref{1.4}.\par
	{\bf Assumption 2.2.}  We assume the following in $\domain$ : (1) Each $(V_k(t,x),A^{(k)}(t,x))\ (k = 1,2)$ satisfies Assumption 2.1 with $M = M_k \geq 0$. (2)  Each $(V_k,A^{(k)})\ (k = 3,4)$ satisfies \eqref{1.6} and \eqref{1.7}.
	(3)  For $M_0 := \min(M_1,M_2)$   $W_{12}$ satisfies
\begin{equation} \label{2.14}
|W_{12}(t,x)| \leq C<x>^{2(M_0 +1)-\delta}
  \end{equation}
with a constant $\delta > 0$ and 
\begin{equation} \label{2.15}
|\partial_x^{\alpha}W_{12}(t,x)| \leq C_{\alpha}<x>^{2(M_0 +1)},\  |\alpha| \geq 1.
  \end{equation}
(4) Each $W_{ij}(t,x)$ except $W_{12}$ satisfies \eqref{1.6}. \par
	We introduce the weighted Sobolev spaces $
B'^a(\mathbb{R}^{4d})  := \{f \in  L^2(\mathbb{R}^{4d});
 \|f\|_a := \|f\| + 
 \sum_{|\alpha| \leq  2a} \|\partial_x^{\alpha}f\| + \sum_{k=1}^4
\|<x^{(k)}>^{2a(M_k+1)}f\| < \infty\}\ (a = 1,2,\dots)
$ with $M_3 = M_4 = 0$, and denote the dual space of $B'^a$ by $B'^{-a}$ and $L^2$ by $B'^0$. 
\begin{thm}
Under Assumption 2.2 for any $u_0 \in B'^a(\bR^{4d})\ (a = 0,\pm1,\pm2,\\
\dots)$ there exists the unique solution $u(t) \in \mathcal{E}_t^0([0,T];B'^a) \cap \mathcal{E}_t^1([0,T];B'^{a-1})$ with $u(0) = u_0$ to \eqref{1.4}.  This solution $u(t)$ satisfies 
\begin{equation} \label{2.16}
 \Vert u(t) \Vert_a \leq C_a \Vert u_0 \Vert_a\quad (0 \leq t \leq T)
 \end{equation}
and in particular
\begin{equation} \label{2.17}
 \Vert u(t) \Vert =  \Vert u_0 \Vert \quad (0 \leq t \leq T).
 \end{equation}
\end{thm}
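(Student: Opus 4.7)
The plan is to run the energy-method scheme already announced in the introduction, replacing the operators (4.1) by the bounded approximants $\{H_\epsilon(t)\}_{0 < \epsilon \leq 1}$ defined in \eqref{5.29} for the 4-particle Hamiltonian. For each fixed $\epsilon$, I would first observe that $H_\epsilon(t)$ is a bounded, norm-continuous family on every $B'^a$, so that Picard iteration in the Banach space $B'^a$ produces a unique $u_\epsilon \in \mathcal{E}_t^0([0,T]; B'^a) \cap \mathcal{E}_t^1([0,T]; B'^a)$ solving $i\,\partial_t u_\epsilon = H_\epsilon(t) u_\epsilon$ with $u_\epsilon(0) = u_0$.

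Next, I would establish $\epsilon$-uniform energy estimates of the form $\|u_\epsilon(t)\|_a \leq C_a \|u_0\|_a$. The bound $\|u_\epsilon(t)\| = \|u_0\|$ follows from the (approximate) symmetry of $H_\epsilon(t)$, just as in Theorem 2.1. For higher $a$, I would compute $\frac{d}{dt}\|Q u_\epsilon\|^2$ for each of the weight operators $Q$ defining $\|\cdot\|_a$, namely $\partial_x^\alpha$ with $|\alpha| \leq 2a$ and $\langle x^{(k)}\rangle^{2a(M_k+1)}$ for $k = 1,\dots,4$, and close a Gronwall inequality. The one-body commutators $[Q, H_k(t)]$ reduce to the single-particle analysis: for $k=1,2$ they are handled exactly as in the proof of Theorem 2.1 through the forthcoming Proposition 4.2, and for $k=3,4$ they are handled by the simpler argument of \cite{Ichinose 1995} under \eqref{1.6}-\eqref{1.7}. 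The interactions $W_{ij}$ with $(i,j) \neq (1,2)$ satisfy \eqref{1.6} and contribute analogously.

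The main obstacle, and really the only genuinely new point beyond the single-particle case, is the control of $W_{12}(t, x^{(1)} - x^{(2)})$, whose growth of order $2(M_0 + 1)$ with $M_0 = \min(M_1, M_2)$ is not a priori dominated by $V_1$ or $V_2$ individually when $M_1 \neq M_2$. The key device is the elementary inequality
\[
\langle x^{(1)} - x^{(2)} \rangle^{2(M_0 + 1)} \leq C\bigl(\langle x^{(1)}\rangle^{2(M_1 + 1)} + \langle x^{(2)}\rangle^{2(M_2 + 1)}\bigr),
\]
which combined with \eqref{2.14} and \eqref{2.15} bounds $W_{12}$ and its $x$-derivatives by weights already present in $\|\cdot\|_a$. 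The strict gain $\delta > 0$ in \eqref{2.14} plays here the same role that \eqref{2.3} played in Theorem 2.1: it lets the $W_{12}$-contributions in the energy identity be absorbed into the coercive lower bound $C_0 \langle x^{(k)}\rangle^{2(M_k+1)} - C_1 \leq V_k$ supplied by \eqref{1.2} for $k = 1, 2$, leaving only remainders that close by Gronwall.

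Finally, the uniform estimate lets me extract a weakly-$*$ convergent subsequence of $\{u_\epsilon\}$ in $L^\infty([0,T]; B'^a)$, pass to the limit in the approximate equation, and then use the equation itself (noting that $H(t)$ maps $B'^a$ boundedly into $B'^{a-1}$) to recover $u \in \Etaprime \cap \mathcal{E}_t^1([0,T]; B'^{a-1})$ satisfying \eqref{2.16}. Conservation \eqref{2.17} follows from direct differentiation against $u$ using the symmetry of $H(t)$. Uniqueness is obtained by applying the same energy inequality to the difference of two solutions, with negative $a$ treated via the duality $(B'^a)^* = B'^{-a}$, exactly as in the proof of Theorem 2.1.
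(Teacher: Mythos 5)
Your overall scaffolding (Picard iteration for the bounded approximants \eqref{5.29}, $\epsilon$-uniform energy estimates, a compactness/limit argument, and uniqueness by duality) matches the paper's, but the step where you propose to actually close the energy estimate is where all the real content lies, and as written it fails. You propose to differentiate $\|Qu_\epsilon\|^2$ for each individual seminorm weight $Q$, i.e.\ $\partial_x^\alpha$ with $|\alpha|\le 2a$ and $\langle x^{(k)}\rangle^{2a(M_k+1)}$. But the commutator of $\partial_x^\alpha$ with $V_k$ produces terms whose symbol is of size $\langle x^{(k)}\rangle^{2(M_k+1)}\langle\xi\rangle^{2a-1}$, and the commutator of $\langle x^{(k)}\rangle^{2a(M_k+1)}$ with the kinetic part produces terms of size $\langle x^{(k)}\rangle^{2a(M_k+1)-1}\langle\xi\rangle$; a Young-inequality check against the governing weight $\langle\xi\rangle^{2a}+\langle x^{(k)}\rangle^{2a(M_k+1)}$ shows that neither is dominated by the $B'^a$-norm once $M_k>0$, so the Gronwall inequality cannot be closed this way. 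Your appeal to ``Proposition 4.2'' does not repair this, because that proposition concerns the commutator of $i\partial_t-H_\epsilon(t)$ with powers of the single operator $\Lambda(t)$, not with the individual weights $Q$.

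The missing idea is the construction of the correct conjugating operator. The paper sets $\Lambda(t)=\mu+\widetilde H(t)$ with $\widetilde H(t)=H_1(t)+H_2(t)+W_{12}(t)+L_3+L_4$ (see \eqref{5.3} and \eqref{5.16}): the rapidly growing pieces $H_1,H_2,W_{12}$ are kept inside $\Lambda(t)$ precisely so that they drop out of $[\Lambda(t),H(t)]$ (otherwise $[W_{12},H_1]\Lambda(t)^{-1}$, whose symbol is of size $\langle x^{(1)}-x^{(2)}\rangle^{2(M_0+1)}\langle\xi^{(1)}\rangle\bigl(\langle\zeta\rangle^2+\Phi(z)^2\bigr)^{-1}$, is unbounded), while $H_3,H_4$ are replaced by the model operators $L_k=\tfrac1{2m_k}|D|^2+\langle x^{(k)}\rangle^2$ because $V_3,V_4$ carry no lower bound of type \eqref{1.2} and $\mu+H_k$ need not be elliptic. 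The $\delta$-gain in \eqref{2.14} is used not ``in the energy identity'' but to establish the ellipticity \eqref{5.5-2} of $\mu+\tilde h_s$ and hence the existence of $\Lambda(t)^{-1}$ (Lemma 5.1 and Proposition 5.2); Lemmas 5.3--5.5 and Proposition 5.6 then show that $[i\partial_t-H_\epsilon(t),\Lambda(t)^a]\Lambda(t)^{-a}$ is $L^2$-bounded uniformly in $\epsilon$, and the Gronwall estimate is run on $\|\Lambda(t)^au_\epsilon(t)\|$, which is equivalent to $\|u_\epsilon(t)\|_a$. Without this construction your argument has a genuine gap at its central step. A secondary point: extracting only a weak-$*$ limit in $L^\infty([0,T];B'^a)$ does not by itself yield $u\in\mathcal{E}_t^0([0,T];B'^a)$; the paper obtains strong continuity by applying the Ascoli--Arzel\`a theorem together with the compactness of the embedding $B'^{a+1}\hookrightarrow B'^a$ to data in $B'^{a+1}$, and then passes to general $u_0$ by density.
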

	We will consider the 4-particle systems \eqref{1.4} with potentials dependent on a parameter $\rho \in \mathcal{O}$.
\begin{thm}
We suppose that $(\Vktxrho,\Aktxrho)\ (k= 1,2,3,4)$ and $\Wijtxrho\ (1 \leq i < j \leq 4)$ for all $\rho \in \mathcal{O}$ satisfy Assumption 2.2 and have the uniform estimates   with respect to $\rho \in \mathcal{O}$ \eqref{1.2} and \eqref{2.1}-\eqref{2.5} for $(V_k, A^{(k)})\ (k = 1,2)$ with $M = M_k$, \eqref{1.6}-\eqref{1.7} for $(V_k, A^{(k)})\ (k = 3,4)$,  \eqref{2.14}-\eqref{2.15} for $W_{12}$ and \eqref{1.6} for $W_{ij}$ except $W_{12}$.  In addition, we assume that $\partial_x^{\alpha}V_k(t,x;\rho)$,  $\partial_x^{\alpha}A^{(k)}_j(t,x;\rho)\ (k = 1,2,3,4, \jdots)$ and  $\partial_x^{\alpha}W_{ij}(t,x;\rho)\ (1 \leq i < j \leq 4)$  for all $\alpha$ are continuous in $[0,T]\times \bR^d \times \mathcal{O}$.
\par
Let $u_0 \in B'^a \ (\adots)$ be independent of $\rho$ and $u(t;\rho)$
	the solutions to \eqref{1.4} with $u(0;\rho) = u_0$ determined in Theorem 2.4.  Then, the mapping : $\mathcal{O} \ni
\rho	\rightarrow \utrho \in \Etaprime$ is continuous. 
\end{thm}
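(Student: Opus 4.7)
The plan is to mirror the strategy used for Theorem 2.2 in the single-particle case: work with the inhomogeneous equation satisfied by $w_n := u(\cdot;\rho_n) - u(\cdot;\rho_0)$ for an arbitrary sequence $\rho_n \to \rho_0$ in $\mathcal{O}$, combine a Duhamel-type energy estimate with a density argument, and deduce that $\sup_t \|w_n(t)\|_a \to 0$. To have enough regularity on $u(\cdot;\rho_0)$ for the forcing term to make sense in $B'^a$, first reduce to nicer initial data: since $B'^{a+1}$ is dense in $B'^a$, choose $u_0^{(\ell)} \in B'^{a+1}$ with $u_0^{(\ell)} \to u_0$ in $B'^a$ and let $u^{(\ell)}(t;\rho)$ be the corresponding solution given by Theorem 2.4. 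The uniformity in $\rho$ of the hypotheses, together with \eqref{2.16} applied at levels $a$ and $a+1$, yields both
\begin{equation*}
\sup_{\rho \in \mathcal{O}}\max_{0 \leq t \leq T}\|u^{(\ell)}(t;\rho)\|_{a+1} \leq C_{a+1}\|u_0^{(\ell)}\|_{a+1}
\end{equation*}
and $\|u^{(\ell)}(t;\rho) - u(t;\rho)\|_a \leq C_a\|u_0^{(\ell)} - u_0\|_a$ uniformly in $(t,\rho)$, so it is enough to prove continuity of $\rho \mapsto u^{(\ell)}(\cdot;\rho)$ into $\Etaprime$ for each fixed $\ell$ and then pass to the limit $\ell \to \infty$.

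For fixed $\ell$, the difference $w_n^{(\ell)} := u^{(\ell)}(\cdot;\rho_n) - u^{(\ell)}(\cdot;\rho_0)$ solves $i\partial_t w_n^{(\ell)} = H(t;\rho_n) w_n^{(\ell)} + F_n^{(\ell)}(t)$ with $w_n^{(\ell)}(0) = 0$ and forcing $F_n^{(\ell)}(t) := \bigl(H(t;\rho_n) - H(t;\rho_0)\bigr)u^{(\ell)}(t;\rho_0)$. The a priori bound \eqref{2.16} applied to this inhomogeneous problem should give
\begin{equation*}
\|w_n^{(\ell)}(t)\|_a \leq C_a \int_0^t \|F_n^{(\ell)}(s)\|_a\, ds,
\end{equation*}
with $C_a$ independent of $n$. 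Because $u^{(\ell)}(s;\rho_0) \in B'^{a+1}$ uniformly in $s$, and the coefficient differences $\Vktxrho[s,\cdot;\rho_n] - V_k(s,\cdot;\rho_0)$, $A_j^{(k)}(s,\cdot;\rho_n) - A_j^{(k)}(s,\cdot;\rho_0)$ and $W_{ij}(s,\cdot;\rho_n) - W_{ij}(s,\cdot;\rho_0)$ (together with their needed $x$-derivatives) converge pointwise to $0$ while being dominated by the uniform polynomial bounds from Assumption 2.2, Lebesgue's dominated convergence theorem yields $\sup_{s \in [0,T]}\|F_n^{(\ell)}(s)\|_a \to 0$. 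Combining with the density reduction gives $\limsup_n \sup_t \|u(t;\rho_n) - u(t;\rho_0)\|_a \leq 2C_a\|u_0^{(\ell)} - u_0\|_a$ for every $\ell$, whence the claim.

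The main obstacle I anticipate is the uniform-in-$n$ Duhamel estimate for the inhomogeneous $4$-particle equation in $B'^a$. The norm $\|\cdot\|_a$ mixes polynomial weights in each individual coordinate $x^{(k)}$ with different exponents $M_k$, while the Hamiltonian contains the interaction $W_{12}$ whose growth lives on the relative coordinate $x^{(1)} - x^{(2)}$; commuting the regularized Hamiltonian $\Hepsilon(t;\rho)$ from \eqref{5.29} through the differential operators $\partial_x^\alpha$ and through the weights $\langle x^{(k)}\rangle^{2a(M_k+1)}$ while balancing the $W_{12}$ contribution against the individual weights is the delicate point. Once these commutators are shown to produce errors bounded uniformly in $\epsilon$, $\rho$ and $n$ --- essentially rerunning the energy argument of Theorem 2.4 with an added forcing term --- the continuity argument above closes, with the case of negative $a$ handled by duality as in the single-particle case.
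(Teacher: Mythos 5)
Your proposal is correct in substance but takes a genuinely different route from the paper. The paper proves Theorem 2.5 exactly as it proves Theorem 2.2: for $u_0 \in B'^{a+1}$ the uniform bound \eqref{2.16} at level $a+1$ together with the integral equation shows that $\{u(t;\rho)\}_{\rho}$ is bounded in $\mathcal{E}_t^0([0,T];B'^{a+1})$ and equicontinuous in $\mathcal{E}_t^0([0,T];B'^{a})$; the compactness of the embedding $B'^{a+1}\hookrightarrow B'^{a}$ (the analogue of Lemma 4.4) and the Ascoli--Arzel\`a theorem yield a subsequence of $u(t;\rho_j)$ converging in $\Etaprime$, whose limit is identified with $u(t;\rho)$ by passing to the limit in the integral equation and invoking uniqueness; a density argument then treats general $u_0\in B'^a$. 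You instead replace compactness-plus-uniqueness by a direct Duhamel estimate on $w_n^{(\ell)}$, using the inhomogeneous a priori bound (the $4$-particle analogue of \eqref{4.17}, whose constant is uniform in $\rho$ because all symbol estimates of \S 5 are uniform in $\rho$) and dominated convergence for the forcing term. This is more quantitative and avoids subsequence extraction, at the price of proving $\Vert(H(t;\rho_n)-H(t;\rho_0))g\Vert_a\to 0$ for fixed $g$; for $a\ge 0$ your dominated-convergence argument does this, but for $a<0$ the step is not literally ``by duality'' --- the clean fix is to take the approximants $u_0^{(\ell)}$ in $B'^{1}$ (still dense in $B'^a$ for $a\le 0$), so that $F_n^{(\ell)}(s)\in L^2$ and $\Vert F_n^{(\ell)}(s)\Vert_a\le\Vert F_n^{(\ell)}(s)\Vert$ reduces everything to an $L^2$ statement. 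Note that the paper's route needs essentially the same coefficient-convergence ingredient one regularity level down, in order to identify the Ascoli--Arzel\`a limit by passing to the limit in $\int_0^t H(\theta;\rho_j)u(\theta;\rho_j)\,d\theta$ in $B'^{a-1}$, so neither approach escapes it; and the commutator machinery you flag as the main obstacle is precisely Lemmas 5.1--5.5 and Proposition 5.6, already established for Theorem 2.4, so you may take it as given.
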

\begin{thm}
Besides the assumptions of Theorem 2.5 we suppose for all $\alpha$ that all functions $\partial_{\rho}\partial^{\alpha}_x\Vktxrho, \partial_{\rho}\partial^{\alpha}_xA^{(k)}_j(t,x;\rho)$ and $\partial_{\rho}\partial^{\alpha}_xW_{ij}(t,x;\rho)$ are continuous in $\domain \times \mathcal{O}$.  In addition, we assume \eqref{2.10}-\eqref{2.11} for $(V_k,A^{(k)})\ (k = 1,2)$ with $M = M_k$,
\begin{equation} \label{2.18}
\sup_{\rho \in \mathcal{O}} |\partial_{\rho}\partial^{\alpha}_x\Vktxrho| \leq C_{\alpha}<x>,\ |\alpha| \geq 1,
\end{equation}
\begin{equation} \label{2.19}
\sup_{\rho \in \mathcal{O}} |\partial_{\rho}\partial^{\alpha}_x\Aktxrho| \leq C_{\alpha},\ |\alpha| \geq 1
\end{equation}
in $\domain$ for $k = 3,4$, 
\begin{equation} \label{2.20}
\sup_{\rho \in \mathcal{O}} |\partial_{\rho}\partial^{\alpha}_xW_{12}(t,x;\rho)| \leq C_{\alpha}<x>^{2(M_0+1)}
\end{equation}
for all $\alpha$ and 
\begin{equation} \label{2.21}
\sup_{\rho \in \mathcal{O}} |\partial_{\rho}\partial^{\alpha}_xW_{ij}(t,x;\rho)| \leq C_{\alpha}<x>, \ |\alpha| \geq 1
\end{equation}
for $(i,j) \not= (1,2).$
\par
 Let $u_0 \in B'^{a+1}\ (\adots)$ be independent of $\rho$ and $\utrho$ the solutions to \eqref{1.4} with $u(0;\rho) = u_0$.  Then we have the same assertion as in Theorem 2.3.
 \end{thm}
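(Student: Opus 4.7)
The plan is to mirror the single-particle argument that yields Theorem 2.3, adapted to the 4-particle Hamiltonian $H(t;\rho)$ appearing on the right-hand side of \eqref{1.4}. Formally differentiating \eqref{1.4} in $\rho$ suggests that $w(t;\rho):=\partial_{\rho}u(t;\rho)$ should solve
\begin{equation}
i\frac{\partial w}{\partial t}(t;\rho)=H(t;\rho)w(t;\rho)+\frac{\partial H(t;\rho)}{\partial \rho}u(t;\rho),\quad w(0;\rho)=0,
\end{equation}
where $\partial_{\rho}H(t;\rho)$ is the pseudo-differential operator (of the type \eqref{2.9}) whose double symbol is the $\rho$-derivative of the full symbol of $H(t;\rho)$. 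The rigorous derivation goes through the difference quotient $w_h(t):=h^{-1}\bigl(u(t;\rho+h)-u(t;\rho)\bigr)$, which, by subtracting the equations \eqref{1.4} at $\rho+h$ and $\rho$, satisfies
\begin{equation}
i\frac{\partial w_h}{\partial t}(t)=H(t;\rho+h)w_h(t)+\frac{H(t;\rho+h)-H(t;\rho)}{h}u(t;\rho),\quad w_h(0)=0.
\end{equation}

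The argument then consists of two ingredients. First, one needs the inhomogeneous energy estimate in $B'^a$ : the solution $\tilde w\in\Etaprime$ of $i\partial_t\tilde w=H(t;\rho)\tilde w+f$ with $\tilde w(0)=0$ satisfies $\Vert \tilde w(t)\Vert_a\leq C_a\int_0^t\Vert f(s)\Vert_a\,ds$ for $f\in\Etaprime$, uniformly in $\rho\in\mathcal{O}$. This follows from Theorem 2.4 via Duhamel's principle, after one establishes the corresponding estimate for the regularized operators $\{H_{\epsilon}(t;\rho)\}$ of \eqref{5.29} and passes to the limit $\epsilon\to 0$. Second, the extra regularity hypothesis $u_0\in B'^{a+1}$ combined with Theorem 2.4 applied at level $a+1$ gives $u(\cdot;\rho)\in\mathcal{E}_t^0([0,T];B'^{a+1})$ with bound $\Vert u(t;\rho)\Vert_{a+1}\leq C_{a+1}\Vert u_0\Vert_{a+1}$ uniform in $\rho$. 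The pointwise estimates \eqref{2.10}--\eqref{2.11}, \eqref{2.18}--\eqref{2.21} together with the mean value theorem show that the symbol of $h^{-1}(H(t;\rho+h)-H(t;\rho))$ converges to the symbol of $\partial_{\rho}H(t;\rho)$ with uniformly dominated derivatives, so that the double-symbol pseudo-differential calculus produces a bounded map $B'^{a+1}\to B'^a$ and gives convergence of the forcing term of the equation for $w_h$ to $(\partial_{\rho}H(t;\rho))u(t;\rho)$ in $\Etaprime$. Feeding this back into the Duhamel estimate for $w_h-w$—with $w$ the solution of the target inhomogeneous equation furnished by the same estimate—yields $w_h\to w$ in $\Etaprime$, proving differentiability. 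The bound \eqref{2.12} is then the Duhamel estimate composed with the $B'^{a+1}$ control on $u$, and continuity of $\rho\mapsto\partial_{\rho}u(\cdot;\rho)$ in $\Etaprime$ follows from a final application of Theorem 2.5 to the inhomogeneous equation satisfied by $w$, treating the forcing $(\partial_{\rho}H)u$ as a parameter-continuous element of $\Etaprime$.

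The hardest step is the inhomogeneous $B'^a$ energy estimate itself, which underlies both ingredients. It must be obtained by working at the level of $\{H_{\epsilon}(t;\rho)\}$, differentiating $\Vert w_{\epsilon}(t)\Vert_a^2$ and expanding the commutators of the weights entering the $B'^a$ norm with $H_{\epsilon}(t;\rho)$ before letting $\epsilon\to 0$. The delicate point, absent in the single-particle situation of Theorem 2.3, is that $V_1$ and $V_2$ grow at different polynomial rates $M_1, M_2$ while the pair interaction $W_{12}$ is controlled only by $M_0=\min(M_1,M_2)$; the commutators between each weight $<x^{(k)}>^{2a(M_k+1)}$ and the various terms in $H(t;\rho)$ must be balanced so that the uniform-in-$\rho$ pointwise bounds \eqref{2.18}--\eqref{2.21} suffice to close the estimate. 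This is the multi-particle analogue of the commutator analysis behind Proposition 4.2, and once it is in place every other step of the proof is a direct transcription of the corresponding step in Theorem 2.3.
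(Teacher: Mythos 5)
Your overall architecture is the right one and matches the paper's: the content of the proof of Theorem 2.6 really is the commutator/symbol calculus for the regularized $4$-particle operators $H_\epsilon(t)$ of \eqref{5.29} (Lemmas 5.1--5.5 and Proposition 5.6, where the mismatch between the growth rates $M_1,M_2$ and the bound \eqref{2.14}--\eqref{2.15} on $W_{12}$ in terms of $M_0$ is absorbed by the weight $\Phi(z)$ of \eqref{5.8}), after which the paper simply transcribes the proof of Theorem 2.3: difference quotients $w_\tau$, the equation \eqref{4.19}, the inhomogeneous estimate \eqref{4.17}, and the uniform bound \eqref{4.20}. Where you diverge is the passage $w_\tau\to w$. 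The paper does \emph{not} run a Duhamel/Gronwall contraction on $w_\tau-w$ at level $a$; it first takes $u_0\in B'^{a+2}$, shows $\{w_\tau\}$ is bounded in $B'^{a+1}$ and equicontinuous in $B'^{a}$, applies Ascoli--Arzel\`a via the compact embedding $B'^{a+1}\hookrightarrow B'^{a}$, identifies the limit with the solution of \eqref{2.13} by uniqueness (which only needs convergence of the forcing in a weak topology), and then descends to $u_0\in B'^{a+1}$ by the density estimates \eqref{4.22}--\eqref{4.23}. Your direct route instead requires the operator family $\tau^{-1}(H(t;\rho+\tau)-H(t;\rho))$ to converge to $\partial_\rho H(t;\rho)$ strongly as maps $B'^{a+1}\to B'^{a}$, uniformly in $t$; this is provable from \eqref{2.10}--\eqref{2.11}, \eqref{2.18}--\eqref{2.21} by uniform boundedness plus convergence on Schwartz functions, but it is exactly the step the compactness argument is designed to avoid, and you assert it rather than prove it.

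One concrete defect as written: you put $H(t;\rho+h)$ on $w_h$ in the difference-quotient equation, so that subtracting the equation for $w$ produces the term $\bigl(H(t;\rho+h)-H(t;\rho)\bigr)w(t;\rho)$. Since $w(\cdot;\rho)$ is only in $\Etaprime$ (the forcing $\partial_\rho H\,u$ lies in $B'^{a}$, not better), this term is controlled only in $B'^{a-1}$, and the level-$a$ Duhamel estimate cannot close. The fix is to write the equation as in \eqref{4.19}, with $H(t;\rho)$ acting on $w_h$ and the difference quotient of $H$ acting on $u(t;\rho+h)\in B'^{a+1}$, so that all error terms are measured against the $B'^{a+1}$ norm of $u$; alternatively, work first at $u_0\in B'^{a+2}$ (so $w\in\mathcal{E}^0_t([0,T];B'^{a+1})$) and then do the density step, which is precisely the paper's two-tier structure. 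With either repair your argument goes through and reproduces the paper's conclusion.
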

 \begin{rem}
 Theorems 2.4 - 2.6 in the present paper give generalizations of Theorems 2.1 - 2.3 in the present paper, Theorem in \cite{Ichinose 1995} and Theorems 2.1 - 2.4 in \cite{Ichinose 2012}.
 \end{rem}
 \section{Preliminaries}
 Let $h(t,x,\xi)$ be the function defined by \eqref{2.8}.
 \begin{lem}
 Assume \eqref{1.2} and \eqref{2.3}.  Then, there exist constant $C_0^* > 0$ and $C_1^*\geq 0$ such that 
 \begin{equation} \label{3.1}
 C_0^*(<\xi>^2 +  <x>^{2(M+1)}) - C_1^* \leq h(t,x,\xi)  \leq  C_0^{*-1}(<\xi>^2 +  <x>^{2(M+1)})
 \end{equation}
 in $\domain$.
 \end{lem}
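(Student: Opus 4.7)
The plan is to bound $h(t,x,\xi) = \tfrac{1}{2m}|\xi - A(t,x)|^2 + V(t,x)$ from above and below directly, exploiting the fact that \eqref{2.3} forces the vector potential $A$ to be of strictly lower order than the weight $\langle x\rangle^{M+1}$ governing both $|\xi - A|^2$ and $V$.

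For the upper bound, I would apply the elementary inequality $|\xi - A|^2 \leq 2|\xi|^2 + 2|A|^2$ and then insert \eqref{2.3} and the right-hand side of \eqref{1.2}. Since $|A(t,x)|^2 \leq C^2 \langle x\rangle^{2(M+1-\delta)} \leq C^2 \langle x\rangle^{2(M+1)}$, this immediately gives $h(t,x,\xi) \leq C'(\langle \xi\rangle^2 + \langle x\rangle^{2(M+1)})$ with a constant depending only on $m$, $C$, $C_2$. This direction is routine.

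For the lower bound, I would use a Young-type inequality: for any $\varepsilon \in (0,1)$,
\begin{equation*}
|\xi - A|^2 = |\xi|^2 - 2\xi\cdot A + |A|^2 \geq (1-\varepsilon)|\xi|^2 + (1 - \varepsilon^{-1})|A|^2,
\end{equation*}
and I would take $\varepsilon = 1/2$ to obtain $|\xi - A|^2 \geq \tfrac{1}{2}|\xi|^2 - |A|^2$. Combined with the lower bound from \eqref{1.2}, this gives
\begin{equation*}
h(t,x,\xi) \geq \tfrac{1}{4m}|\xi|^2 - \tfrac{1}{2m}|A(t,x)|^2 + C_0 \langle x\rangle^{2(M+1)} - C_1.
\end{equation*}
The key point is that the unwanted term $\tfrac{1}{2m}|A|^2$ is of order $\langle x\rangle^{2(M+1-\delta)}$, which is $o(\langle x\rangle^{2(M+1)})$ as $|x| \to \infty$ because $\delta > 0$. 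Concretely, one can choose $R > 0$ depending only on $m, C, C_0, \delta$ so that $\tfrac{1}{2m}C^2\langle x\rangle^{2(M+1-\delta)} \leq \tfrac{C_0}{2}\langle x\rangle^{2(M+1)}$ whenever $|x| \geq R$; for $|x| \leq R$ the quantity $\tfrac{1}{2m}|A(t,x)|^2$ is uniformly bounded by some constant $C_1''$ (by continuity and \eqref{2.3}), and this excess is absorbed into a new additive constant.

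Assembling these pieces and using $|\xi|^2 = \langle\xi\rangle^2 - 1$ to trade $|\xi|^2$ for $\langle\xi\rangle^2$ at the cost of another additive constant yields $h(t,x,\xi) \geq C_0^*(\langle\xi\rangle^2 + \langle x\rangle^{2(M+1)}) - C_1^*$ with, for instance, $C_0^* = \min(\tfrac{1}{4m}, \tfrac{C_0}{2})$ (after possibly shrinking to accommodate the upper bound constant as well). The only delicate point is the splitting into $|x| \leq R$ and $|x| \geq R$ to handle the term $\tfrac{1}{2m}|A|^2$; once this is done, all other steps are straightforward applications of the hypotheses.
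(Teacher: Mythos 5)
Your proposal is correct, and both bounds go through as you describe; the only difference from the paper lies in how the cross term $\xi\cdot A$ is controlled in the lower bound. You use the quadratic Cauchy inequality $2|\xi||A|\le \varepsilon|\xi|^2+\varepsilon^{-1}|A|^2$ with $\varepsilon=1/2$, which leaves an error $|A|^2\le C^2\langle x\rangle^{2(M+1-\delta)}=\langle x\rangle^{2(M+1)}\cdot O(\langle x\rangle^{-2\delta})$ that is sub-leading purely in the $x$-variable and is absorbed by splitting into $|x|\le R$ and $|x|\ge R$. The paper instead applies Young's inequality $|A||\xi|\le \frac1p|A|^p+\frac1q|\xi|^q$ with exponents $p,q$ tuned to $\delta$ and $M$ so that $p(M+1-\delta)=2(M+1)\delta_1$ and $q=2\delta_2$ with $0<\delta_1,\delta_2<1$; the error terms $\langle x\rangle^{2(M+1)\delta_1}$ and $\langle\xi\rangle^{2\delta_2}$ are then sub-leading powers of the two main terms separately and are absorbed by the elementary bound $t^{\theta}\le \epsilon t+C_\epsilon$ for $0<\theta<1$, with no explicit radius splitting. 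The two mechanisms are interchangeable here and yield the same conclusion; yours is the more elementary, while the paper's keeps the estimate in a single global inequality in $(x,\xi)$. Your remarks on trading $|\xi|^2$ for $\langle\xi\rangle^2$ and on shrinking $C_0^*$ so that the same constant serves both sides of \eqref{3.1} are exactly what is needed to finish.
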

 \begin{proof}
 From \eqref{2.8} we have $h(t,x,\xi) \leq (|\xi|^2 + |A(t,x)|^2)/m + V(t,x)$ and hence by \eqref{1.2} and \eqref{2.3}
 \begin{equation*}
 h(t,x,\xi) \leq C(<\xi>^2 + <x>^{2(M+1)})
 \end{equation*}
 in $\domain$ with a constant $C \geq 0$. \par
   We may assume $0 < \delta \leq M+1$ in \eqref{2.3}.  Take $p > 1$ and $q > 1$ so that 
 \begin{equation*}
\frac{1}{p} = \frac{1}{2}\left(1 - \frac{1}{2}\cdot\frac{\delta}{M+1}\right),\quad \frac{1}{p} + \frac{1}{q} = 1.
 \end{equation*}
Then we have
 \begin{align*}
& p(M+1 - \delta) = 2(M+1)\cdot\frac{1-\frac{\delta}{M+1}}{1-\frac{1}{2}\cdot\frac{\delta}{M+1}} \equiv 2(M+1)\delta_1,
\notag \\
&q = \frac{2}{1+\frac{1}{2}\cdot\frac{\delta}{M+1}} \equiv 2\delta_2
 \end{align*}
 with $0 < \delta_j < 1\ (j =  1,2).$  Hence, Young's inequality and \eqref{2.3} show
 \begin{align*}
&  |A(t,x)|\cdot|\xi| \leq \frac{1}{p}|A|^p + \frac{1}{q}|\xi|^q \leq \frac{1}{p}<x>^{p(M+1-\delta)} + \frac{1}{q}|\xi|^q
\notag \\
&= \frac{1}{p}<x>^{2(M+1)\delta_1} + \frac{1}{q}|\xi|^{2\delta_2}.
\end{align*}
 Applying this, \eqref{1.2} and \eqref{2.3} to \eqref{2.8}, we have
 \begin{align*} 
  h(t,x,\xi)  & \geq  \frac{1}{2m}\left(|\xi|^2  - 2|A| \cdot |\xi|\right) + V \\
   & \geq C_0(<\xi>^2 - <x>^{2(M+1)\delta_1} -<\xi>^{2\delta_2} + <x>^{2(M+1)}) - C_1
 \end{align*}
 with constants $C_0 > 0$ and $C_1 \geq 0$.  Therefore, we obtain \eqref{3.1}.
 \end{proof}
 	We fix $C_0^*$ and $C_1^*$ in Lemma 3.1 hereafter.  We set 
	\begin{equation} \label{3.2}
	h_s(t,x,\xi) = h(t,x,\xi) + \frac{i}{2m}\nabla\cdot A(t,x),
	\end{equation}
	where $\nabla\cdot A(t,x) = \sum_{j=1}^d\partial_{x_j}A_j(t,x).$
	Since  the real part $\rittaire h_s(t,x,\xi)$ of $h_s(t,x,\xi)$ is equal to $h(t,x,\xi)$, we can determine
	\begin{equation} \label{3.3}
	p_{\mu}(t,x,\xi) := \frac{1}{\mu + h_s(t,x,\xi)}
	\end{equation}
	for $\mu \geq C_1^*$ under the assumptions of Lemma 3.1.  We  denote by $H_s(t,X,D_x)f$ the pseudo-differential operator 
	\begin{equation*} 
	\int e^{ix\cdot\xi}h_s(t,x,\xi)\hdbar\xi\int e^{-iy\cdot\xi}f(y)dy
	\end{equation*}
	for $f \in \Sspace$ with the symbol $h_s(t,x,\xi)$.  As is well known (cf. Theorem 2.5 in Chapter 2 of \cite{Kumano-go}), $H_s(t,X,D_x) = H(t)$ holds, where $H(t)$ is the operator defined by \eqref{1.1} or \eqref{2.9}.
	\begin{lem}
	Assume \eqref{1.2}, \eqref{2.1} and \eqref{2.3} - \eqref{2.4}.  Then we have
	\begin{equation} \label{3.4}
	\bigl[\mu + H(t)\bigr]P_{\mu}\txdx = I + R_{\mu}\txdx,
	\end{equation}
	\begin{equation} \label{3.5}
	\left|r^{(\alpha)}_{\mu\ (\beta)}(t,x,\xi)\right| \leq  C_{\alpha\beta}\left(\mu - C_1^*\right)^{-1/2}
	\end{equation}
	in $\domains$ for $\mu \geq C_0^*/2 + C_1^*$ with constants $C_{\alpha\beta}$ independent of $\mu$, where $r^{(\alpha)}_{\mu\ (\beta)} = \partial_{\xi}^{\alpha}(-i\partial_x)^{\beta}r_{\mu}$. 
	\end{lem}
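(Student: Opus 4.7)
The plan is to derive \eqref{3.4} as an exact identity of symbols by composing pseudo-differential operators, and then to estimate $r_\mu$ and its derivatives using the lower bound on $h$ from Lemma~3.1.

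First I would exploit the fact that $h_s(t,x,\xi)$ is polynomial of degree~$2$ in $\xi$, so that $\partial_\xi^\alpha h_s\equiv 0$ for $|\alpha|\geq 3$ and the Taylor expansion $h_s(t,x,\xi+\eta)=\sum_{|\alpha|\leq 2}(\alpha!)^{-1}\partial_\xi^\alpha h_s(t,x,\xi)\,\eta^\alpha$ is exact. Substituting into the oscillatory integral defining $\sigma(H_s\circ P_\mu)$ and integrating by parts in $y$ (cf.\ Theorem~2.5 of Chapter~2 in \cite{Kumano-go}) would then yield the exact finite formula
\begin{equation*}
\sigma(H_s\circ P_\mu) = h_s\,p_\mu + \sum_{1\leq |\alpha|\leq 2}\frac{1}{\alpha!}\,\partial_\xi^\alpha h_s\cdot D_x^\alpha p_\mu,\qquad D_x=-i\partial_x.
\end{equation*}
Using $(\mu+h_s)p_\mu=1$ from \eqref{3.3} then gives \eqref{3.4} with $r_\mu=\sum_{1\leq |\alpha|\leq 2}(\alpha!)^{-1}\partial_\xi^\alpha h_s\cdot D_x^\alpha p_\mu$.

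Next, setting $W=(<\xi>^2+<x>^{2(M+1)})^{1/2}$ and $\mu'=\mu-C_1^*$, Lemma~3.1 and $\text{Re}(\mu+h_s)=\mu+h$ would supply $|p_\mu|\leq (C_0^*W^2+\mu')^{-1}$. The hypotheses \eqref{1.2}, \eqref{2.1}, \eqref{2.3}, \eqref{2.4} give $|\partial_\xi^\gamma\partial_x^\delta h_s|\leq C_{\gamma\delta}\,W^{\max(2-|\gamma|,\,0)}$ for all $\gamma,\delta$. I would then apply Fa\`a di Bruno's formula to $p_\mu=(\mu+h_s)^{-1}$ and induct on the order of differentiation to obtain that $\partial_\xi^\gamma D_x^\delta p_\mu$ is dominated by a finite sum of terms $C\,W^m/(C_0^*W^2+\mu')^{k+1}$ with $m\leq 2k-|\gamma|$. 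Differentiating the formula above for $r_\mu$ and inserting these bounds termwise, each $r^{(\alpha)}_{\mu\,(\beta)}$ is dominated by a finite sum of quantities $C\,W^N/(C_0^*W^2+\mu')^K$ satisfying the strict weight deficit $N<2K$, which is inherited from the $\partial_\xi^\gamma h_s$ factor with $|\gamma|\geq 1$ present in every summand of $r_\mu$.

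The final step would invoke the elementary one-variable calculus identity $\sup_{W\geq 0}W^N/(C_0^*W^2+\mu')^K\leq C_{N,K}\,(\mu')^{N/2-K}$ (valid whenever $N<2K$). The worst exponent $N/2-K=-1/2$ produces $(\mu')^{-1/2}$ directly; larger positive exponents $K-N/2\geq 1$ are absorbed using $\mu'\geq C_0^*/2$ via $(\mu')^{-1}\leq\sqrt{2/C_0^*}\,(\mu')^{-1/2}$, which yields \eqref{3.5}. The hard part will be the bookkeeping in the second step: one must verify that every term produced by differentiating $r_\mu$ preserves the strict deficit $N<2K$. The key structural observation is that $\partial_{x_j}$ multiplies the numerator by $W^2$ and the denominator by a single factor of $C_0^*W^2+\mu'$ (hence preserves $2K-N$), while $\partial_{\xi_j}$ either drops a numerator $W$-power by one or annihilates a $\partial_\xi^\gamma h_s$ factor; once this is checked, the calculus estimate closes the argument.
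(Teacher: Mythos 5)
Your proposal is correct, and it reaches \eqref{3.4}--\eqref{3.5} by a genuinely different route in the composition step. The paper does not use the exactness of the Leibniz formula: it writes $r_{\mu}$ as the \emph{first-order Taylor remainder} of the symbolic calculus, an oscillatory integral $\sum_{|\alpha|=1}\int_0^1 d\theta\,\text{Os}-\iint e^{-iy\cdot\eta}h_s^{(\alpha)}(t,x,\xi+\theta\eta)\,p_{\mu(\alpha)}(t,x+y,\xi)\,dy\hspace{0.08cm}\dbar\xi$, and then controls it by inserting $<y>^{-2l_0}<D_{\eta}>^{2l_0}<\eta>^{-2l_1}<D_y>^{2l_1}$, integrating by parts, and trading $<x+y>$ for $<x>$ via a Peetre-type inequality at the cost of powers of $<y>$. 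Your observation that $\mu+H_s(t,X,D_x)$ is a differential operator (degree $2$ in $\xi$), so that $\sigma\bigl((\mu+H_s)\circ P_{\mu}\bigr)=\sum_{|\alpha|\leq 2}(\alpha!)^{-1}\partial_{\xi}^{\alpha}h_s\cdot D_x^{\alpha}p_{\mu}$ exactly, replaces the oscillatory integral by a closed finite sum and makes the subsequent estimates purely pointwise; this is a legitimate simplification here (it would not survive a generalization in which the leading operator is only pseudo-differential, which is presumably why the paper keeps the integral form it imports from \cite{Ichinose 1995}). From that point on the two arguments coincide in substance: your bounds $|\partial_{\xi}^{\gamma}\partial_x^{\delta}h_s|\leq C W^{\max(2-|\gamma|,0)}$ are the paper's \eqref{3.8}--\eqref{3.9} and \eqref{3.25}, the lower bound $\mathrm{Re}(\mu+h_s)\geq C_0^*W^2+\mu-C_1^*$ is \eqref{3.6}, your bookkeeping of the deficit $2K-N\geq 1$ under $\partial_x$ and $\partial_{\xi}$ is sound (every summand of $r_{\mu}$ carries at least one $\xi$-derivative of $h_s$, and the deficit is non-decreasing under further differentiation), and your final calculus inequality $\sup_{\theta\geq 1}\theta^{N/2}/(\mu'+C_0^*\theta)^{K}\leq C(\mu')^{N/2-K}$ for $N<2K$ is exactly the paper's appeal to (2.9) of \cite{Ichinose 1995} with $\kappa=1$, $\tau=2$, including the absorption of the better-decaying terms using $\mu'\geq C_0^*/2$.
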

	\begin{proof}
	Let $\mu \geq C_1^*$.
	By Lemma 3.1 and \eqref{3.2} we have
 \begin{equation} \label{3.6}
  C_0^*(<\xi>^2  + <x>^{2(M+1)}) + \mu - C_1^* \leq   \mu + \rittaire h_s(t,x,\xi).
   \end{equation}
     Since $H(t) = H_s\txdx$, from (2.13) in \cite{Ichinose 1995} we have
   \begin{align} \label{3.7}
  & r_{\mu}(t,x,\xi) = \sum_{|\alpha| = 1} \int_0^1 d\theta\  \text{Os}-\iint e^{-iy\cdot\eta}h_s^{(\alpha)}(t,x,\xi+\theta\eta)
  p_{\mu (\alpha)}(t,x+y,\xi)dy\hdbar\eta  \notag\\
 &  =  \sum_{|\alpha| = 1} \int_0^1 d\theta\  \text{Os}-\iint e^{-iy\cdot\eta}
 <y>^{-2l_0}  <D_{\eta}>^{2l_0}<\eta>^{-2l_1} <D_y>^{2l_1} \notag\\
 & \hspace{2cm} \cdot h_s^{(\alpha)}(t,x,\xi+\theta\eta)  p_{\mu (\alpha)}(t,x+y,\xi)dy\hdbar\eta
 \end{align}
  for large integers $l_0$ and $l_1$, where $<D_\eta>^2 = 1 - \sumjd \partial_{\eta_j}^2.$ \par
  	Now, using \eqref{2.1} and \eqref{2.3} - \eqref{2.4}, from \eqref{3.2} we have
   \begin{align*} 
  & |\partial_{x_j} h_s(t,x+y,\xi)| \leq C\left(<\xi><x+y>^{M+1} + <x + y>^{2(M+1)}\right) \notag\\
 &   \leq C'\left(<\xi>^2 + <x + y>^{2(M+1)}\right).
 \end{align*}
 In the same way we can prove
 \begin{equation} \label{3.8}
 |h^{(\alpha)}_{s\ (\beta)}(t,x+y,\xi)| \leq  C \left(<\xi>^2 + <x+y >^{2(M+1)}\right) 
 \end{equation}
 for all $\alpha$ and $|\beta|\geq 1$, and 
 \begin{equation} \label{3.9}
 |h^{(\alpha)}_{s\ (\beta)}(t,x,\xi + \theta\eta)| \leq  C \left(<\xi>+ <x >^{M+1}\right) <\eta>
 \end{equation}
 for $|\alpha|\geq 1$ and all $\beta$.  We also note
   \begin{align*} 
  & \frac{1}{<\xi>^2 + <x+y>^{2(M+1)}} \leq  \frac{1}{<\xi>^2 + <x>^{2(M+1)}/(\sqrt{2}<y>)^{2(M+1)}}\notag\\
 &   \leq \frac{(\sqrt{2}<y>)^{2(M+1)}}{<\xi>^2 + <x>^{2(M+1)}}.
 \end{align*}
  Apply \eqref{3.6} and \eqref{3.8} - \eqref{3.9} to \eqref{3.7}.  Then, 
  taking integers $l_0$ and $l_1$ so that $2l_0- 2(M+1) > d$ and $2l_1 - 1 > d$, we have 
   \begin{align} \label{3.10}
  & |r_{\mu}(t,x,\xi)| \leq C \iint
 <y>^{-2l_0} <\eta>^{-2l_1}<\eta> <y>^{2(M+1)}dy\hdbar\eta \notag\\
 & \hspace{1cm} \times \frac{\Theta^{1/2}}{\mu - C_1^* + C_0^*\Theta}	\leq C' \max_{1 \leq \theta}  \frac{\theta^{1/2}}{\mu - C_1^* + C_0^*\theta}
 \end{align}
 with constants $C$ and $C'$ independent of $\mu \geq C_1^*$, where  $\Theta = <\xi>^2 + \\
 <x>^{2(M+1)}$.
 Applying (2.9) in \cite{Ichinose 1995} with $\kappa = 1$ and $\tau = 2$ to \eqref{3.10}, we have
 \begin{equation*} 
  |r_{\mu}(t,x,\xi)| \leq C_0(\mu - C_1^*)^{-1/2} 
  \end{equation*}
 for $\mu \geq C_0^*/2 + C_1^*.$  In the same way we can prove \eqref{3.5} from \eqref{3.7} - \eqref{3.9}.
	\end{proof}
	\begin{pro}
	Under the assumptions of Lemma 3.2 there exist a constant $\mu \geq C_0^*/2 + C_1^*$ and a function $w(t,x,\xi)$ in $\domains$ satisfying 
 \begin{equation} \label{3.11}
  |w^{(\alpha)}_{\ \  (\beta)}(t,x,\xi)| \leq C_{\alpha\beta}\left(<\xi>^2 + <x>^{2(M+1)}\right)^{-1}
  \end{equation}
  for all $\alpha, \beta$ and 
 \begin{equation} \label{3.12}
 W\txdx = \bigl(\mu+ H(t)\bigr)^{-1}.
   \end{equation}
	\end{pro}
	\begin{proof} Let $\mu \geq C_0^*/2 + C_1^*$.
	From \eqref{3.3}, \eqref{3.6} and \eqref{3.8} - \eqref{3.9} we see
 \begin{equation*} 
  |p^{(\alpha)}_{\mu\  (\beta)}(t,x,\xi)| \leq C_{\alpha\beta}\left(<\xi>^2 + <x>^{2(M+1)}\right)^{-1}
  \end{equation*}
    for all $\alpha$ and $ \beta$.  Therefore, we can complete the proof of Proposition 3.3 as in the proof of (2.16) of \cite{Ichinose 1995} by using Lemma 3.2.
	\end{proof}
	We take a constant $\mu \geq C_0^*/2 + C_1^*$ in Proposition 3.3 and fix it hereafter throughout \S 3 and \S4.  Set
 \begin{equation} \label{3.13}
\lambda(t,x,\xi) := \mu + h_s(t,x,\xi).
   \end{equation}
 Then, from \eqref{3.2} we have
 \begin{equation} \label{3.14}
\Lambda\txdx = \mu + H_s(t,X,D_x) = \mu + H(t).
   \end{equation}
We take a $\chi \in \Sspace$ such that $\chi(0) = 1$ and set 
 \begin{equation} \label{3.15}
\chi_{\epsilon}(t,x,\xi) := \chi\bigl(\epsilon(\mu + h(t,x,\xi)\bigr)
   \end{equation}
 for constants $0 < \epsilon \leq 1$.  We note that $h(t,x,\xi)$ defined by \eqref{2.8} is a real-valued function. \par
 	The following is crucial in the present paper.
	\begin{lem}
	Under the assumptions of Lemma 3.2 there exist functions $k_{\epsilon}(t,x,\xi)\ (\zeo)$ in $\domains$ satisfying
 \begin{equation} \label{3.16}
 \sup_{0 < \epsilon \leq 1}\sup_{t,x,\xi} |k^{(\alpha)}_{\epsilon\  (\beta)}(t,x,\xi)| \leq C_{\alpha\beta} < \infty
  \end{equation}
  for all $\alpha, \beta$ and 
 \begin{equation} \label{3.17}
K_{\epsilon}\txdx = \Bigl[X_{\epsilon}\txdx, \Lambda\txdx\Bigr],
   \end{equation}
where $[\cdot,\cdot]$ denotes the commutator of operators.
	\end{lem}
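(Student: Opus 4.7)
The plan is to derive $k_\epsilon$ by applying Kumano-go's composition formula to the two products $X_\epsilon\Lambda$ and $\Lambda X_\epsilon$, cancelling the leading symbols $\chi_\epsilon\lambda$, and extracting uniform bounds from the Schwartz-class decay of $\chi$. Writing $\phi(t,x,\xi)=\mu+h(t,x,\xi)$ and $s=\epsilon\phi$, the composition formula represents each product as an oscillatory integral
\[
\text{Os-}\iint e^{-iy\cdot\eta}\chi_\epsilon(t,x,\xi+\eta)\lambda(t,x+y,\xi)\,dy\,\hdbar\eta,
\]
together with its symmetric version; Taylor expansion in $(\eta,y)$ (as in the derivation of (3.7)) gives an asymptotic expansion indexed by $|\alpha|\geq 0$. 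The $|\alpha|=0$ terms both equal $\chi_\epsilon\lambda$ and cancel in the commutator, leaving the Poisson bracket $(1/i)\{\chi_\epsilon,\lambda\}$ at first order, together with $|\alpha|\geq 2$ corrections and a remainder given by an oscillatory integral.

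The analytic input is the Schwartz bound $|\chi^{(k)}(s)\,s^{m}|\leq C_{k,m}$ for every $k,m\geq 0$. By Fa\`a di Bruno,
\[
\partial^\alpha_\xi\partial^\beta_x \chi_\epsilon \;=\; \sum_{k=1}^{|\alpha|+|\beta|}\chi^{(k)}(\epsilon\phi)\sum_{\gamma_1+\cdots+\gamma_k=(\alpha,\beta)} c_\gamma\,(\epsilon\,\partial^{\gamma_1}\phi)\cdots(\epsilon\,\partial^{\gamma_k}\phi),
\]
and the estimates (3.8)--(3.9) already used for $h_s$ translate into $|\partial^\gamma h(t,x,\xi)|\leq C_\gamma(\mu+h)$ for $|\gamma|\geq 1$. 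Hence each summand is dominated by $|\chi^{(k)}(s)|\,s^k\leq C_{\alpha\beta}$, so that $\chi_\epsilon$ belongs to the symbol class $S^{0}_{0,0}$ uniformly in $\epsilon\in(0,1]$.

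For the leading Poisson-bracket contribution I would exploit $\{h,h\}=0$ and the algebraic identity $h_s-h=(i/2m)\nabla\cdot A$, which depends only on $x$:
\[
\{\chi_\epsilon,\lambda\}\;=\;\chi'(\epsilon\phi)\,\epsilon\,\{h,h_s\}\;=\;\frac{i}{2m}\,\chi'(\epsilon\phi)\,\epsilon\sumjd \partial_{\xi_j}h\,\partial_{x_j}(\nabla\cdot A).
\]
Assumption 2.1 together with $2ab\leq a^2+b^2$ yields $\left|\sumjd \partial_{\xi_j}h\,\partial_{x_j}(\nabla\cdot A)\right|\leq C(\mu+h)$, so $|\{\chi_\epsilon,\lambda\}|\leq C|\chi'(s)\,s|$, uniformly bounded. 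The $|\alpha|\geq 2$ terms in the asymptotic expansion carry strictly more factors of $\epsilon$ than derivatives of $\chi$ and are absorbed by the same chain-rule estimate. The oscillatory-integral remainder is handled just as in the proof of Lemma 3.2, by inserting $<D_\eta>^{2l_0}<D_y>^{2l_1}$ and choosing $l_0,l_1$ large enough to ensure absolute convergence in $(y,\eta)$.

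The main obstacle is the uniformity bookkeeping: every $(x,\xi)$-derivative landing on $\chi_\epsilon$ produces a polynomially large factor from $\partial h$, but it is always paired with an $\epsilon$; the combination $\epsilon\phi$ sits inside the argument of $\chi^{(k)}$, whose rapid decay absorbs arbitrarily many such factors. Propagating this cancellation uniformly to every mixed derivative $\partial^\alpha_\xi\partial^\beta_x k_\epsilon$, including derivatives applied through the oscillatory-integral remainder, is the technically careful but conceptually routine step that closes the proof.
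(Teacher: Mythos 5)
Your overall strategy coincides with the paper's: expand the commutator by Kumano-go's composition theorem, note that the zeroth-order terms cancel, identify the first-order contribution, and reduce it via $h_s-h=(i/2m)\nabla\cdot A$ to $\epsilon\chi\,'(\epsilon(\mu+h))\sumjd(\xi_j-A_j)\partial_{x_j}(\nabla\cdot A)/(2m^2)$, which is bounded uniformly in $\epsilon$ by the rapid decay of $\chi\,'$. This part of your argument matches (3.18)--(3.20) of the paper exactly, and your Poisson-bracket computation agrees with (3.19).

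The gap is in the second-order remainder $I_{2\epsilon}$. Its integrand contains $\chi_{\epsilon}^{(\gamma)}(t,x,\xi+\theta\eta)\,\lambda_{(\gamma)}(t,x+y,\xi)$ with $|\gamma|=2$, and by (3.8) the factor $\lambda_{(\gamma)}(t,x+y,\xi)=h_{s\,(\gamma)}(t,x+y,\xi)$ grows like $<\xi>^2+<x+y>^{2(M+1)}$. Your argument establishes only that $\chi_{\epsilon}$ lies in $S^0_{0,0}$ uniformly in $\epsilon$, i.e.\ that all of its derivatives are bounded; boundedness of $\chi_{\epsilon}^{(\gamma)}$ does not make this product bounded in $(x,\xi)$, and inserting $<D_{\eta}>^{2l_0}<D_y>^{2l_1}$ only buys integrability in $(y,\eta)$, not decay in $(x,\xi)$. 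What is needed is the paper's estimate (3.21): two $\xi$-derivatives of $\chi_{\epsilon}$ gain a full factor $\left(<\xi>^2+<x>^{2(M+1)}\right)^{-1}$. This follows from the sharper bound $|\partial_{\xi_j}h|=|\xi_j-A_j|/m\leq C(\mu+h)^{1/2}$ --- which you do exploit for the first-order term via $2ab\leq a^2+b^2$, but which is lost in the blanket estimate $|\partial^{\gamma}h|\leq C_{\gamma}(\mu+h)$ feeding your Fa\`a di Bruno bound: with that estimate each summand is only $|\chi^{(k)}(s)|\,s^k\leq C$, with no decay, and your claim that the higher-order terms ``carry strictly more factors of $\epsilon$ than derivatives of $\chi$'' is not true term by term. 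One also needs the companion observation that $\lambda^{(\gamma)}$ for $|\gamma|=2$ in $\xi$ is a constant ($0$ or $1/m$), so that for the other half of the commutator the mere boundedness (3.22) of the $x$-derivatives of $\chi_{\epsilon}$ suffices. With these two points supplied your argument closes and reproduces the paper's proof; as written, the uniform bound (3.16) for $I_{2\epsilon}$ does not follow.
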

	\begin{proof}
	Apply Theorem 3.1 in Chapter 2 of \cite{Kumano-go} to the right-hand side of \eqref{3.17}.  Then we have
   \begin{align} \label{3.18}
  & k_{\epsilon}(t,x,\xi) = \sum_{|\alpha| = 1} \Bigl\{\chi_{\epsilon}^{(\alpha)}(t,x,\xi)\lambda_{(\alpha)}(t,x,\xi)  
  - \lambda^{(\alpha)}(t,x,\xi)\chi_{\epsilon(\alpha)}(t,x,\xi) \Bigr\}\notag\\
 &\quad   + 2 \sum_{|\gamma| = 2}\frac{1}{\gamma\, !}\int_0^1 (1 - \theta)d\theta\  \text{Os}-\iint e^{-iy\cdot\eta}
 \Bigl\{\chi_{\epsilon}^{(\gamma)}(t,x,\xi+\theta\eta)\lambda_{(\gamma)}(t,x+y,\xi)  \notag\\
 & \qquad \quad 
  - \lambda^{(\gamma)}(t,x,\xi+\theta\eta)\chi_{\epsilon(\gamma)}(t,x+y,\xi) \Bigr\}dy\hdbar\eta 
  \equiv I_{1\epsilon} + I_{2\epsilon}.
 \end{align}
 By \eqref{3.2}, \eqref{3.13} and \eqref{3.15} we can write 
   \begin{align} \label{3.19}
  & I_{1\epsilon}(t,x,\xi) = \epsilon\chi\,'(\epsilon(\mu +h))\sum_{|\alpha| = 1}\Bigl\{h^{(\alpha)}h_{s(\alpha)}
  - h^{(\alpha)}_sh_{(\alpha)} \Bigr\}\notag\\
 & =  \epsilon\chi\,'\bigl(\epsilon(\mu +h(t,x,\xi))\bigr)\sum_{|\alpha| = 1}\frac{i}{2m^2}(\xi_{\alpha} - A_{\alpha}(t,x))(-i\partial_x)^{\alpha}\nabla\cdot A(t,x).
 \end{align}
 Hence, using $\epsilon\chi\,'(\epsilon(\mu +h))
 = (\mu +h)^{-1}\bigl\{\epsilon(\mu +h)\chi \,'(\epsilon(\mu +h))\bigr\}$ and Lemma 3.1, from \eqref{2.3} - \eqref{2.4} we can prove
 $
\sup_{\zeo}\sup_{t,x,\xi}|I_{1\epsilon}| < \infty. 
$
 In the same way from \eqref{3.19} we can prove
 \begin{equation} \label{3.20}
 \sup_{0 < \epsilon \leq 1}\sup_{t,x,\xi} |I^{(\alpha)}_{1\epsilon\  (\beta)}(t,x,\xi)| \leq C_{\alpha\beta} < \infty
  \end{equation}
  for all $\alpha$ and $\beta$.
  \par
	Next we will consider $I_{2\epsilon}$. Let $|\gamma| = 2$.  Since from \eqref{3.15} we have 
 \begin{equation*} 
\partial_{\xi_j}\chi_{\epsilon}(t,x,\xi) = \frac{1}{m}\epsilon \chi \,'(\epsilon(\mu +h))(\xi_j - A_j)
  \end{equation*}
  and
 \begin{equation*} 
\epsilon(\xi_j - A_j)\partial_{\xi_k}\chi \,'(\epsilon(\mu +h)) =  \frac{1}{m}\epsilon^2(\xi_j - A_j)(\xi_k - A_k)\chi \,''(\epsilon(\mu +h)),
  \end{equation*}
  as in the proof of \eqref{3.20} we can easily prove
 \begin{equation} \label{3.21}
\sup_{\zeo}  |\chi^{(\alpha+\gamma)}_{\epsilon\ \ (\beta)}(t,x,\xi)| \leq C_{\alpha\beta}\left(<\xi>^2 + <x>^{2(M+1)}\right)^{-1}
  \end{equation}
  for all $\alpha$ and $\beta$.  In the same way we can also prove
 \begin{equation} \label{3.22}
\sup_{\zeo}  |\chi^{(\alpha)}_{\epsilon\ (\beta+\gamma)}(t,x,\xi)| \leq C_{\alpha\beta} < \infty
  \end{equation}
   for all $\alpha$ and $\beta$.  We also note from \eqref{3.13} that each of $\lambda^{(\gamma)} = h_s^{(\gamma)}$ for $|\gamma| = 2$ is equal to $0$ or $1/m$.
   Hence, applying \eqref{3.8} and \eqref{3.21} - \eqref{3.22} to $I_{2\epsilon}$ in \eqref{3.18}, as in the proof of \eqref{3.10} we have 
 $
\sup_{\zeo}\sup_{t,x,\xi}|I_{2\epsilon}| < \infty. 
$
In the same way we can prove 
 \begin{equation*} 
 \sup_{0 < \epsilon \leq 1}\sup_{t,x,\xi} |I^{(\alpha)}_{2\epsilon\  (\beta)}(t,x,\xi)| \leq C_{\alpha\beta} < \infty
  \end{equation*}
   for all $\alpha$ and $\beta$, which 
   completes the proof of Lemma 3.4 together with \eqref{3.20}.
	\end{proof}
	Let 
 \begin{equation} \label{3.23}
\lambda_{M}(x,\xi) := \mu\,' + \frac{1}{2m}|\xi|^2 + <x>^{2(M+1)},
  \end{equation}
  which is equal to $\lambda(t,x,\xi)$ defined by \eqref{3.13} with $V =  <x>^{2(M+1)}$ and $A = 0$.  Let $B^a\ (\adots)$ 
be the weighted Sobolev spaces introduced in \S 1.
	\begin{pro}
	(1)  There exist a constant $\mu\,' \geq 0$  and a function $w_M(x,\xi)$ in $\domains$ satisfying 
	\eqref{3.11}  
  for all $\alpha, \beta$ and 
 \begin{equation} \label{3.24}
 W_M(X,D_x) = \Lambda_M(X,D_x)^{-1}.
   \end{equation}
(2) We take a $\mu\,' \geq 0$ satisfying (1).  Let $f$ be in the dual space $\mathcal{S}\,'(\bR^d)$ of $\Sspace$.  Then, $B^a \ni f \ (\adots)$ is equivalent to $ (\Lambda_M)^a f \in L^2.$
	\end{pro}
	\begin{proof}
	The assertion (1) follows from Proposition 3.3.  The assertion (2) follows from Lemma 2.4  of \cite{Ichinose 1995} with $s = a, a = 2(M+1)$ and $b=2$.
	\end{proof}
	Let $\mu\,' \geq 0$ be a constant in Proposition 3.5 and fix it hereafter throughout the present paper.  Under the assumptions of Lemma 3.2 from \eqref{3.1}, \eqref{3.8} and \eqref{3.9} we have
 \begin{equation} \label{3.25}
|h^{(\alpha)}_{s\  (\beta)}(t,x,\xi)| \leq C_{\alpha\beta}\weight
  \end{equation}
  in $\domains$ for all $\alpha$ and $\beta$.
  %
	\section{Proofs of Theorems 2.1 - 2.3}
	Let $\lambda(t,x,\xi)$ and $\chi_{\epsilon}\txxi\ (\zeo)$ be the functions defined by \eqref{3.13} and \eqref{3.15}, respectively.  We define the approximation of $H(t)$ by the product of operators
 \begin{equation} \label{4.1}
H_{\epsilon}(t) := X_{\epsilon}\txdx^{\dagger}H(t)X_{\epsilon}\txdx,
   \end{equation}
 where $X_{\epsilon}\txdx^{\dagger}$ denotes the formally adjoint operator of $X_{\epsilon}\txdx$.
 \begin{lem}
 Under Assumption 2.1 there exist functions $\qepsilon\txxi\ (\zeo)$ satisfying
 \begin{equation} \label{4.2}
 \sup_{0 < \epsilon \leq 1}\sup_{t,x,\xi} |q^{(\alpha)}_{\epsilon\  (\beta)}(t,x,\xi)| \leq C_{\alpha\beta} < \infty
  \end{equation}
  for all $\alpha, \beta$ and
 \begin{align} \label{4.3}
Q_{\epsilon}\txdx = &\Bigl[\Lambda\txdx, H_{\epsilon}(t)\Bigr]\Lambda\txdx^{-1} \notag \\
&  + i\frac{\partial\Lambda}{\partial t}\txdx\Lambda\txdx^{-1}.
   \end{align}
 \end{lem}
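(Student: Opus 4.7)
My plan is to write $Q_\epsilon$ as a finite sum of explicit pseudo-differential compositions whose symbols can be estimated one-by-one, then invoke the Kumano-go composition calculus (Theorem 2.5 in Chapter 2 of \cite{Kumano-go}) to collect everything into a single pseudo-differential operator with symbol $q_\epsilon$ satisfying \eqref{4.2}.

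I begin with the commutator term. Since $[\Lambda, H] = 0$ and $H_\epsilon = X_\epsilon\txdx^\dagger H(t) X_\epsilon\txdx$, the Leibniz rule gives
\[
[\Lambda, H_\epsilon] = [H, X_\epsilon^\dagger] H X_\epsilon + X_\epsilon^\dagger H [H, X_\epsilon] = K_\epsilon^\dagger H X_\epsilon - X_\epsilon^\dagger H K_\epsilon,
\]
where $[H, X_\epsilon] = -K_\epsilon$ by Lemma 3.4 (using $\Lambda = \mu + H$) and $[H, X_\epsilon^\dagger] = K_\epsilon^\dagger$ follows by taking formal adjoints and using $H(t)^\dagger = H(t)$. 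To balance the unbounded middle factor $H$ against the trailing $\Lambda^{-1}$, I would commute $H$ past $X_\epsilon$ and $K_\epsilon$ and then apply the trivial identity $H\Lambda^{-1} = I - \mu\Lambda^{-1}$, yielding
\[
H X_\epsilon \Lambda^{-1} = X_\epsilon - \mu X_\epsilon \Lambda^{-1} - K_\epsilon \Lambda^{-1},\quad H K_\epsilon \Lambda^{-1} = K_\epsilon - \mu K_\epsilon \Lambda^{-1} + [H, K_\epsilon]\Lambda^{-1}.
\]
All terms except the last are compositions of symbols which are either bounded with bounded derivatives uniformly in $\epsilon$ by Lemma 3.4, or decay like $\weight^{-1}$ by Proposition 3.3, so the calculus produces symbols with uniformly bounded derivatives. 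For the residual piece $[H, K_\epsilon]\Lambda^{-1}$, I would inspect the asymptotic expansion of the commutator: each term is a product of a derivative of $h_s$ (bounded by $\weight$ via \eqref{3.25}) with a derivative of $k_\epsilon$ (bounded by Lemma 3.4), and the subsequent composition with $\Lambda^{-1}$ absorbs the growth via Proposition 3.3.

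For the time-derivative contribution $i(\partial_t \Lambda)\Lambda^{-1}$, the symbol of $\partial_t \Lambda$ is $\partial_t h_s(t,x,\xi)$, and because $\partial_t h_s$ has exactly the same algebraic structure as $h_s$ with $V, A$ replaced by $\partial_t V, \partial_t A$, assumptions \eqref{2.2} and \eqref{2.5} give the bound $|\partial_\xi^\alpha \partial_x^\beta \partial_t h_s| \leq C_{\alpha\beta}\weight$, identical in form to \eqref{3.25}. Composing with $\Lambda^{-1}$ then yields a symbol with uniformly bounded derivatives, and adding this contribution to the commutator piece produces the desired $q_\epsilon$ obeying \eqref{4.2}-\eqref{4.3}.

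The main technical obstacle I expect is the uniformity in $\epsilon \in (0,1]$ of the oscillatory-integral remainders in the symbol compositions. The leading and sub-leading contributions are under immediate control through Lemma 3.4 and \eqref{3.25}, but the higher-order residual terms need to be estimated by the same $<y>^{-2l_0}<D_\eta>^{2l_0}<\eta>^{-2l_1}<D_y>^{2l_1}$ integration-by-parts device used in the proofs of Lemma 3.2 and Lemma 3.4, with $\epsilon$-dependent factors such as $\epsilon\chi'(\epsilon(\mu+h))$ rewritten in the form $(\mu+h)^{-1}[\epsilon(\mu+h)\chi'(\epsilon(\mu+h))]$ (cf.\ the paragraph after \eqref{3.19}) so that they are absorbed via Lemma 3.1 and do not degenerate as $\epsilon \to 0$.
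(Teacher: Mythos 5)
Your proposal is correct and follows essentially the same route as the paper: the same decomposition $[\Lambda,H_\epsilon]=-[\Lambda,X_\epsilon]^\dagger H X_\epsilon+X_\epsilon^\dagger H[\Lambda,X_\epsilon]$ (using $[\Lambda,H]=0$ and $\Lambda^\dagger=\Lambda$), with Lemma 3.4 supplying the uniformly bounded commutator symbols and Proposition 3.3 together with \eqref{3.25} controlling the compositions with $H$ and $\Lambda^{-1}$, and the time-derivative term handled exactly as you describe. Your extra step of pushing $H$ to the right via $H\Lambda^{-1}=I-\mu\Lambda^{-1}$ (at the cost of the additional commutator $[H,K_\epsilon]$, which you bound correctly) is just a more explicit bookkeeping of what the paper does implicitly through the Kumano-go symbol calculus.
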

 \begin{proof}
 We first note 
 \begin{align*}
 & \Bigl[\Lambda\txdx, H_{\epsilon}(t)\Bigr] = \Bigl[\Lambda(t), X_{\epsilon}(t)^{\dagger}\Bigr]H(t)X_{\epsilon}(t) \\
 &\quad  + X_{\epsilon}(t)^{\dagger}\Bigl[\Lambda(t), H(t)\Bigr]X_{\epsilon}(t) + 
 X_{\epsilon}(t)^{\dagger}H(t)\Bigl[\Lambda(t), X_{\epsilon}(t)\Bigr].
 \end{align*}
 Since $\Lambda(t) = \mu + H(t)$ from \eqref{3.14}, we have 
 $[\Lambda(t),H(t)] = 0$ and  $\Lambda(t)^{\dagger} = \Lambda(t)$. Hence
 \begin{align} \label{4.4}
 & \Bigl[\Lambda(t), H_{\epsilon}(t)\Bigr] = -\Bigl[\Lambda(t), X_{\epsilon}(t)\Bigr]^{\dagger}H(t)X_{\epsilon}(t) +  X_{\epsilon}(t)^{\dagger}H(t)\Bigl[\Lambda(t), X_{\epsilon}(t)\Bigr].
   \end{align}
  Thereby, applying  Lemma 3.4 to \eqref{4.4}, we see from Proposition 3.3 and \eqref{3.25} that there exist functions $q_{1\epsilon}\txxi\ (\zeo)$ satisfying \eqref{4.2} and 
 \begin{align*} 
Q_{1\epsilon}\txdx = &\Bigl[\Lambda\txdx, H_{\epsilon}(t)\Bigr]\Lambda\txdx^{-1}.
 \end{align*}
 It is easy to study the second term in the right-hand side of  \eqref{4.3} by Proposition 3.3. Thus, our proof is complete.
 \end{proof}
 \begin{pro}
 Under Assumption 2.1 there exist functions $q_{a\epsilon}\txxi\ (\adots, \zeo)$ satisfying \eqref{4.2} and 
 \begin{equation} \label{4.5}
Q_{a\epsilon}\txdx = \Biggl[i\frac{\partial}{\partial t}- H_{\epsilon}(t), \Lambda(t)^a\Biggr]\Lambda(t)^{-a}.
   \end{equation}
 \end{pro}
 \begin{proof}
 For $a = 0$ the assertion is clear.  For $a = 1$ the assertion follows from Lemma 4.1. Let us consider the case $a = 2$.  We note
 \begin{equation} \label{4.6}
[P,QR] = [P,Q]R + Q[P,R]
   \end{equation}
and thereby
 \begin{align*} 
 &\Biggl[i\frac{\partial}{\partial t}- H_{\epsilon}(t), \Lambda(t)^2\Biggr]\Lambda(t)^{-2} = \Biggl[i\frac{\partial}{\partial t}- H_{\epsilon}(t), \Lambda(t)\Biggr]\Lambda(t)^{-1} \\
 & + \Lambda(t)\Biggl[i\frac{\partial}{\partial t}- H_{\epsilon}(t), \Lambda(t)\Biggr]\Lambda(t)^{-2} 
 =  \Qepsilon(t) + \Lambda(t)\Qepsilon(t)\Lambda(t)^{-1}.
 \end{align*}
 Hence, it follows from Lemma 4.1 and Proposition 3.3 that the assertion holds.  We consider the case $a = 3$.  From \eqref{4.6} we have 
 \begin{align*} 
 &\Biggl[i\frac{\partial}{\partial t}- H_{\epsilon}(t), \Lambda(t)^3\Biggr]\Lambda(t)^{-3} = \Biggl[i\frac{\partial}{\partial t}- H_{\epsilon}(t), \Lambda(t)\Biggr]\Lambda(t)^{-1} \\
 & + \Lambda(t)\Biggl\{\Biggl[i\frac{\partial}{\partial t}- H_{\epsilon}(t), \Lambda(t)^2\Biggr]\Lambda(t)^{-2}\Bigg\} \Lambda(t)^{-1} \\
 & = \Qepsilon(t) + \Lambda(t)Q_{2\epsilon}(t)\Lambda(t)^{-1}.
  \end{align*}
 Consequently, using the results for $a = 1, 2$, we see that the assertion holds.  In the same way we can prove the assertion for $a = 0,1,2,\dots$ by induction. 
 \par
 	Next we consider the case $a = -1,-2,\dots.$  From \eqref{4.6} we have 
 \begin{align*} 
0 =  &\Biggl[i\frac{\partial}{\partial t}- H_{\epsilon}(t), \Lambda(t)^{-1}\Lambda(t)\Biggr] = \Biggl[i\frac{\partial}{\partial t}- H_{\epsilon}(t), \Lambda(t)^{-1}\Biggr]\Lambda(t) \\
 & + \Lambda(t)^{-1}\Biggl[i\frac{\partial}{\partial t}- H_{\epsilon}(t), \Lambda(t)\Biggr],
 \end{align*}
 which shows 
 \begin{align*} 
 &\Biggl[i\frac{\partial}{\partial t}- H_{\epsilon}(t), \Lambda(t)^{-1}\Biggr]\Lambda(t) = - \Lambda(t)^{-1}\Biggl[i\frac{\partial}{\partial t}- H_{\epsilon}(t), \Lambda(t)\Biggr]\Lambda(t)^{-1}\Lambda(t) \\
 &  =  - \Lambda(t)^{-1}\Qepsilon(t)\Lambda(t).
 \end{align*}
 Hence the assertion for $a = -1$ holds.  We consider the case $a = -2$.  From \eqref{4.6} we have 
 \begin{align*} 
 &\Biggl[i\frac{\partial}{\partial t}- H_{\epsilon}(t), \Lambda(t)^{-2}\Biggr]\Lambda(t)^{2} = \Biggl[i\frac{\partial}{\partial t}- H_{\epsilon}(t), \Lambda(t)^{-1}\Biggr]\Lambda(t) \\
 & + \Lambda(t)^{-1}\Biggl\{\Biggl[i\frac{\partial}{\partial t}- H_{\epsilon}(t), \Lambda(t)^{-1}\Biggr]\Lambda(t)\Biggr\} \Lambda(t) \\
  & = Q_{-1\epsilon}(t) + \Lambda(t)^{-1}Q_{-1\epsilon}(t)\Lambda(t),
 \end{align*}
 which shows the assertion.  In the same way we can prove the assertion for $a = -1,-2,\dots$ by induction.  Therefore, our proof is complete.
 \end{proof}
 	We consider the equation
 \begin{equation} \label{4.7}
i\frac{\partial u}{\partial t}(t) = H_{\epsilon}(t)u(t) + f(t).
   \end{equation}
   \begin{pro}
   Let $u_0 \in B^a \ (\adots)$ and $f(t) \in \Eta$.  Then, under Assumption 2.1 there exist solutions $u_{\epsilon}(t) \in \mathcal{E}_t^1([0,T];B^a)\ (\zeo)$ with $u_{\epsilon}(0) = u_0$ to \eqref{4.7} satisfying
 \begin{equation} \label{4.8}
\sup_{\zeo}\Vert u_{\epsilon}(t)\Vert_a \leq C_a\left(\Vert u_{0}\Vert_a + \int_0^t \Vert f(\theta)\Vert_a d\theta\right).
   \end{equation}
   In particular, if $u_0 \in L^2$ and $f = 0$, we have $\Vert u_{\epsilon}(t)\Vert = \Vert u_{0}\Vert$.
   \end{pro}
   \begin{proof}
   Applying Theorem 2.5 in Chapter 2 of \cite{Kumano-go} to \eqref{4.1}, we see that each of $H_{\epsilon}(t)\ (\zeo)$ is written as the pseudo-differential operator with the symbol $p_{\epsilon}(t,x,\xi)$ satisfying
 \begin{equation*} 
\sup_{t,x,\xi}\left|p^{(\alpha)}_{\epsilon\ (\beta)} \txxi\right| \leq C_{\alpha\beta} < \infty
  \end{equation*}
  for all $\alpha$ and $\beta$, where $C_{\alpha\beta}$ may depend on $\zeo$.  Consequently, it follows from Lemma 2.5  of \cite{Ichinose 1995} with $s = a, a = 2(M+1)$ and $b = 2$ and \eqref{1.3} in the present paper that we have
  \begin{equation*}
  \sup_{0 \leq t \leq T}\Vert H_{\epsilon}(t)f\Vert_a \leq C_{a\epsilon}\Vert f\Vert_a
  \end{equation*}
  for $\adots$ with constants $C_{a\epsilon} \geq 0$ dependent on $\zeo$.  Hence, noting that
   the equation \eqref{4.7} is equivalent to 
 \begin{equation*} 
iu(t) = iu_0 + \int_0^t\bigl\{\Hepsilon(\theta)u(\theta) + f(\theta)\bigr\}d\theta,
  \end{equation*}
  we can find a solution $\uepsilon(t) \in \mathcal{E}_t^1([0,T];B^a)$ by the successive iteration for each $\zeo$.  From \eqref{4.5} and \eqref{4.7} we have
 \begin{equation} \label{4.9}
i\frac{\partial }{\partial t}\Lambda(t)^a\uepsilon(t) = H_{\epsilon}(t)\Lambda(t)^a\uepsilon(t) + Q_{a\epsilon}(t)\Lambda(t)^a\uepsilon(t) + \Lambda(t)^af(t).
   \end{equation}
   Applying the Calder\'on-Vaillancourt theorem (cf. p.224 in \cite{Kumano-go}),  from \eqref{3.25}, Propositions 3.3 and 3.5 we have
   $\Lambda(t)^a\uepsilon(t) \in \mathcal{E}_t^1([0,T];L^2)$ because of $\uepsilon(t) \in \mathcal{E}_t^1([0,T];B^a)$.  Noting that $H_{\epsilon}(t)$ defined by \eqref{4.1} is symmetric on $L^2$, from \eqref{4.9} we have
 \begin{align*} 
& \frac{d}{dt}\Vert \Lambda(t)^a\uepsilon(t)\Vert^2 = 2 \rittaire \left(\frac{\partial}{\partial t}\Lambda(t)^a\uepsilon(t),\Lambda(t)^a\uepsilon(t)\right) \\
& = -2\rittaire \bigl(iQ_{a\epsilon}(t)\Lambda(t)^a\uepsilon(t),\Lambda(t)^a\uepsilon(t)\bigr) - 2\rittaire \bigl(i\Lambda(t)^af(t),\Lambda(t)^a\uepsilon(t)\bigr).
  \end{align*}
Hence, using Proposition 4.2 and the Calder\'on-Vaillancourt theorem, we have
 \begin{align} \label{4.10}
 \frac{d}{dt}\Vert \Lambda(t)^a\uepsilon(t)\Vert^2 \leq 2C_a\left( \Vert \Lambda(t)^a\uepsilon(t)\Vert^2 + \Vert \Lambda(t)^af(t)\Vert \cdot\Vert \Lambda(t)^a\uepsilon(t)\Vert\right) 
  \end{align}
  with a constant $C_a$ independent of $\zeo$.   
  \par
  	For a moment take a constant $\eta > 0$ and set $v(t) := \bigl(\Vert\Lambda(t)^a\uepsilon(t)\Vert^2 + \eta\bigr)^{1_2}$, which is a positive and continuously differentiable  function with respect to $t$.  From \eqref{4.10} we have
 \begin{align*} 
 \frac{d}{dt}v(t)^2 \leq 2C_a\bigl( v(t)^2 + \Vert \Lambda(t)^af(t)\Vert v(t)\bigr) 
  \end{align*}
  and so $v'(t) \leq C_a\bigl( v(t) + \Vert \Lambda(t)^af(t)\Vert \bigr) $.  Hence we see
 \begin{align*} 
 v(t) \leq e^{C_at}v(0) + C_a\int_0^t e^{C_a(t-\theta)}\Vert\Lambda(\theta)^af(\theta)\Vert d\theta.
  \end{align*}
  Letting $\eta$ to $0$, we get
 \begin{align} \label{4.11}
 \Vert \Lambda(t)^a\uepsilon(t)\Vert \leq e^{C_at} \Vert \Lambda(0)^au_0\Vert + C_a\int_0^t e^{C_a(t-\theta)}\Vert\Lambda(\theta)^af(\theta)\Vert d\theta.  \end{align}
  Therefore, noting \eqref{3.25}, Propositions 3.3 and 3.5, we can prove \eqref{4.8} with another constants $C_a \geq 0$.
  \par
  	Finally, we consider the case where $a=0$ and $f=0$.  We have $d\Vert \uepsilon(t)\Vert^2/dt = 0$ instead of \eqref{4.10}, which shows $\Vert \uepsilon(t)\Vert = \Vert u_0\Vert$.
   \end{proof}
   The following has been proved in Lemma 3.1 of \cite{Ichinose 2012}.
   \begin{lem}
   Let $\adots$.  Then,
   the embedding map from $B^{a+1}$ into $B^{a}$ is compact.
   \end{lem}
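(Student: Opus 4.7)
The plan is to use Proposition 3.5 to reduce the statement to the compactness of a single operator on $L^2$, and then to establish that compactness by a pseudo-differential cutoff approximation.

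First I would invoke Proposition 3.5(2), which asserts that for each integer $a$ the operator $\Lambda_M^a$ is a Banach space isomorphism from $B^a$ onto $L^2$. Through these isomorphisms the natural inclusion $\iota : B^{a+1} \hookrightarrow B^a$ factors as
\begin{equation*}
\iota = (\Lambda_M^a)^{-1} \circ \Lambda_M^{-1} \circ \Lambda_M^{a+1},
\end{equation*}
so $\iota$ is compact if and only if the single operator $\Lambda_M^{-1} = W_M(X,D_x)$ is compact on $L^2$. The dependence on $a$ thus drops out and it suffices to prove this one statement.

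Next I would approximate the symbol. By Proposition 3.5(1) the symbol $w_M$ satisfies \eqref{3.11} for all $\alpha, \beta$. Fix $\psi \in \Cinfty_0(\bR^d)$ with $\psi \equiv 1$ on $\{|z| \leq 1\}$ and $\psi \equiv 0$ outside a larger ball, and for $R \geq 1$ set $w_{M,R}(x,\xi) := \psi(x/R)\psi(\xi/R) w_M(x,\xi)$. The Schwartz kernel
\begin{equation*}
K_R(x,y) = (2\pi)^{-d}\int e^{i(x-y)\cdot\xi} w_{M,R}(x,\xi)\,d\xi
\end{equation*}
has compact support in $x$ and, after repeated integration by parts in $\xi$, decays faster than any power of $(1+|x-y|^2)^{-1}$. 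Hence $K_R \in L^2(\bR^{2d})$, so $W_{M,R}(X,D_x)$ is Hilbert--Schmidt and in particular compact on $L^2$. On the other hand, $w_M - w_{M,R}$ vanishes on $\{|x|\leq R, |\xi|\leq R\}$, and on its support $(<\xi>^2 + <x>^{2(M+1)})^{-1} \leq C R^{-2}$; combined with the fact that each derivative falling on $\psi(x/R)$ or $\psi(\xi/R)$ contributes a factor $R^{-1}$, this yields
\begin{equation*}
\sup_{x,\xi} \bigl|\partial_\xi^{\alpha}\partial_x^{\beta}(w_M - w_{M,R})(x,\xi)\bigr| \to 0 \quad (R \to \infty)
\end{equation*}
for every multi-index pair $(\alpha, \beta)$. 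By the Calder\'on--Vaillancourt theorem (p.~224 of \cite{Kumano-go}), $\|W_M - W_{M,R}\|_{L^2 \to L^2} \to 0$, so $W_M$ is a uniform limit of compact operators and is itself compact.

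The main obstacle I expect is securing the uniform decay of the symbol derivatives required by Calder\'on--Vaillancourt. The Leibniz expansion of $\partial_\xi^\alpha \partial_x^\beta(w_M - w_{M,R})$ yields a leading term in which all derivatives fall on $w_M$ and must be controlled on the support of $1 - \psi(x/R)\psi(\xi/R)$ using only the decay \eqref{3.11}, together with remainder terms that gain factors $R^{-|\alpha_1|-|\beta_1|}$ from the cutoffs but are localized to the annular regions $|x| \sim R$ or $|\xi| \sim R$. Once uniform bounds of order $R^{-\delta}$ in the finite dimension-dependent collection of such sup-norms are secured, the compact approximation argument concludes.
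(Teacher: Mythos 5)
Your proposal is correct, but it is worth noting that the paper does not actually prove this lemma at all: it simply cites Lemma~3.1 of \cite{Ichinose 2012}. You instead give a self-contained argument built on the machinery the paper has already set up in \S 3, which is a legitimate and arguably more satisfying route. Your reduction via Proposition~3.5(2) is sound: $\Lambda_M^{a+1}$ and $(\Lambda_M^{a})^{-1}$ are bounded bijections between the relevant spaces (boundedness in both directions comes from Lemma~2.5 of \cite{Ichinose 1995} together with \eqref{3.11} and the Calder\'on--Vaillancourt theorem, or simply from the open mapping theorem once bijectivity and one-sided boundedness are known), so compactness of the inclusion $B^{a+1}\hookrightarrow B^{a}$ for every integer $a$ is equivalent to compactness of the single operator $W_M(X,D_x)=\Lambda_M^{-1}$ on $L^2$. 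The cutoff approximation is then the standard proof that a pseudo-differential operator whose symbol decays in both $x$ and $\xi$ (here at rate $\bigl(<\xi>^2+<x>^{2(M+1)}\bigr)^{-1}\leq R^{-2}$ off the box $\{|x|\leq R,\ |\xi|\leq R\}$, using $M\geq 0$) is compact: the truncated pieces are Hilbert--Schmidt, and the errors tend to zero in the finitely many $S^0_{0,0}$ seminorms controlled by Calder\'on--Vaillancourt. The alternative route one would expect behind the citation to \cite{Ichinose 2012} is a Rellich--Kondrachov argument (local compactness plus uniform smallness of tails forced by the weight $<x>^{2(a+1)(M+1)}$); your symbol-calculus proof buys uniformity in $a$ for free through the conjugation by $\Lambda_M^{a}$ and stays entirely within the toolkit of \S 3, at the cost of invoking Calder\'on--Vaillancourt rather than classical Sobolev embedding.
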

	Now we will prove Theorem 2.1.  Our proof is similar to that of Theorem in \cite{Ichinose 1995}. 
	\par
	{\bf 1st step.}  Throughout 1st step we suppose $u_0 \in B^{a+1}$ and $f(t) \in \mathcal{E}_t^0([0,T]; \\
	B^{a+1})$.  Let $\uepsilon(t) \in \mathcal{E}_t^1([0,T];B^{a+1})\ (\zeo)$ be the solutions to \eqref{4.7} with $u(0) = u_0$ found in Proposition 4.3.  We see from \eqref{3.25}, Propositions 3.5 and 4.3 that the family $\bigl\{\uepsilon(t)\bigr\}_{\zeo}$
	is bounded in  $\mathcal{E}_t^0([0,T];B^{a+1})$ and 
equi-continuous in  $\mathcal{E}_t^0([0,T];B^{a})$ because 
 \begin{equation*} 
i\bigl\{\uepsilon(t)  - \uepsilon(t')\bigr\} = \int_{t'}^t\Hepsilon(\theta)\uepsilon(\theta)d\theta + \int_{t'}^tf(\theta)d\theta
  \end{equation*}
  and 
 \begin{equation*} 
\sup_{\zeo}\max_{0 \leq t \leq T}\Vert H_{\epsilon}(t)u_{\epsilon}(t)\Vert_a \leq C_a \sup_{\zeo}\max_{0 \leq t \leq T}\Vert u_{\epsilon}(t)\Vert_{a+1} \leq C'_a \Vert u_0\Vert _{a+1}.
  \end{equation*}
Consequently, it follows from Lemma 4.4 that we can apply the Ascoli-Arzel\`{a} theorem to $\left\{\uepsilon(t)\right\}_{\zeo}$ in $\Eta$.  Hence, there exist a sequence $\bigl\{\epsilon_j\bigr\}_{j=1}^{\infty}$ tending to zero and a function $u(t) \in \Eta$  such that 
 \begin{align} \label{4.12}
 \lim_{j \rightarrow \infty} u_{\epsilon_j}(t)  = u(t) \  \text{in}\ \Eta.
 \end{align}
  Then, since from \eqref{4.7} we have
 \begin{align*} 
& u_{\epsilon_j}(t) = u_0 -i \int_0^tH_{\epsilon_j}(\theta)u_{\epsilon_j}(\theta)d\theta - i \int_0^tf(\theta)d\theta \\
& = u_0 -i \int_0^tH_{\epsilon_j}(\theta)u(\theta)d\theta 
-i \int_0^tH_{\epsilon_j}(\theta)\bigl\{u_{\epsilon_j}(\theta)- u(\theta)\bigr\}d\theta - i \int_0^tf(\theta)d\theta,
  \end{align*}
  as in the proof of \eqref{3.14} in \cite{Ichinose 1995} we have
 \begin{equation*} 
u(t) = u_0 -i \int_0^tH(\theta)u(\theta)d\theta - i \int_0^tf(\theta)d\theta
  \end{equation*}
  in $\mathcal{E}_t^0([0,T];B^{a-1})$ by using Lemma 2.2 in \cite{Ichinose 1995}.  Therefore, we see that $u(t)$ belongs to $\Eta \cap \mathcal{E}_t^1([0,T];B^{a-1})$ and satisfies 
 \begin{equation} \label{4.13}
i\frac{\partial u}{\partial t}(t) = H(t)u(t) + f(t)
   \end{equation}
with $u(0) = u_0$.  From \eqref{4.8} and \eqref{4.12} we also have
 \begin{equation} \label{4.14}
\Vert u(t)\Vert_a \leq C_a\left(\Vert u_{0}\Vert_a + \int_0^t \Vert f(\theta)\Vert_a d\theta\right).
   \end{equation}
   \par
   {\bf 2nd step.}  In this step we will prove that a solution to \eqref{1.1} or \eqref{4.13} with a given initial data at $t=0$ is uniquely determined in $\Eta \cap \mathcal{E}_t^1([0,T];B^{a-1})$ for any $\adots$. 
   \par
   Let $u(t) \in \Eta \cap \mathcal{E}_t^1([0,T];B^{a-1})$ be a solution to \eqref{1.1}, i.e.
 \begin{equation*} 
i\frac{\partial u}{\partial t}(t) = H(t)u(t)
   \end{equation*}
   with $u(0) = 0$.  We may assume $a \leq 0$ because of $B^{a+1} \subset B^a$.  Let 
   $g(t) \in \mathcal{E}_t^0([0,T];B^{-a+2})$ be an arbitrary function and consider the equation
 \begin{equation*} 
i\frac{\partial v}{\partial t}(t) = H(t)v(t) + g(t)
   \end{equation*}
with $v(T) = 0$.  From the 1st step we can get a solution $v(t) \in \mathcal{E}_t^0([0,T];B^{-a+1}) \cap \mathcal{E}_t^1([0,T];B^{-a})$.  Then we have
\begin{align*}
0 &= \int_0^T\left(i\frac{\partial u}{\partial t}(t) - H(t)u(t), v(t)\right)dt \\
& =  \int_0^T\left(u(t), i\frac{\partial v}{\partial t}(t) - H(t)v(t)\right)dt = \int_0^T \bigl(u(t), g(t)\bigr)dt,
\end{align*}
which shows $u(t) = 0$.
\par
{\bf 3rd step.}  Let $u_0 \in B^a.$  We take $\bigl\{u_{0j}\bigr\}_{j=1}^{\infty}$ in $B^{a+1}$ such that $\lim_{j\to \infty}u_{0j} = u_0$ in $B^{a}$.  Let $u_j(t) \in \mathcal{E}_t^0([0,T];B^{a}) \cap \mathcal{E}_t^1([0,T];B^{a-1})$ be the solution to \eqref{1.1} with $u(0) = u_{0j}$, uniquely determined in the above 2 steps.  Since $u_j(t) - u_k(t) \in \mathcal{E}_t^0([0,T];B^{a}) \cap \mathcal{E}_t^1([0,T];B^{a-1})$ is the solution to \eqref{1.1} with $u(0) = u_{0j} - u_{0k} \in B^{a+1}$, from \eqref{4.14} we have
 \begin{equation} \label{4.15}
\Vert u_j(t) - u_k(t)\Vert_a \leq C_a\Vert u_{0j} - u_{0k}\Vert_a.
   \end{equation}
   Consequently, there exists a $u(t) \in \mathcal{E}_t^0([0,T];B^{a})$ such that $\lim_{j\to \infty}u_j(t) = u(t)$ in $\mathcal{E}_t^0([0,T];B^{a})$.  Since
 \begin{equation*} 
u_{j}(t) = u_{0j} -i \int_0^tH(\theta)u_{j}(\theta)d\theta,
  \end{equation*}
   $u(t)$ belongs to $\mathcal{E}_t^0([0,T];B^{a}) \cap \mathcal{E}_t^1([0,T];B^{a-1})$ and satisfies \eqref{1.1} with $u(0) = u_0$.  We can also prove \eqref{2.6} because $\Vert u_j(t)\Vert_a \leq C_a\Vert u_{0j}\Vert_a$ holds from \eqref{4.14}.
  \par
  	Finally, we will prove \eqref{2.7}.  Let $u_0 \in B^1$ and $\uepsilon(t)\ (\zeo)$ the solutions found in Proposition 4.3 to \eqref{4.7} with $u(0) = u_0$ and $f(t) = 0$.
	Then  as in the proof of  \eqref{4.14} we have $\Vert u(t)\Vert = \Vert u_0\Vert$ from $\Vert\uepsilon(t)\Vert = \Vert u_0\Vert$ and \eqref{4.12}.   Let $u_0 \in L^2$.  Take $\{u_{0j}\}_{j=1}^{\infty}$ in $B^1$ such that $\lim_{j \to \infty}u_{0j} = u_0$ in $L^2$ and let $u_j(t)$ be the solutions to \eqref{1.1} with $u(0) = u_{0j}$.  Then we have $\Vert u_j(t)\Vert = \Vert u_{0j}\Vert$.  Since we have proved $\lim_{j \to \infty}u_j(t) = u(t)$ in $\mathcal{E}_t^0([0,T];L^2)$ from \eqref{4.15}, we see \eqref{2.7}.
	Thus, our proof of Theorem 2.1 is complete. 
	\vspace{0.3cm}
\par
	Next, we will prove Theorem 2.2.  Our proof below is similar to that of Theorem 4.1 in \cite{Ichinose 2012}.
	\par
	Let $\utrho\ (\rho \in \mathcal{O})$ be the solutions to \eqref{1.1} with $u(0;\rho) = u_0 \in B^a \ (\adots)$.  Then, following the proof of Theorem 2.1, under the assumptions of Theorem 2.2 we have
 \begin{equation} \label{4.16}
\sup_{\rho \in \mathcal{O}}\Vert \utrho\Vert_a \leq C_a\Vert u_0\Vert_a\quad (0 \leq t \leq T).
   \end{equation}
   \par
   	We first assume $u_0 \in B^{a+1}$.  Then from \eqref{4.16} we have
 \begin{equation*} 
\sup_{\rho \in \mathcal{O}}\Vert \utrho\Vert_{a+1} \leq C_{a+1}\Vert u_0\Vert_{a+1}
   \end{equation*}
    and hence as in the 1st step of the proof of Theorem 2.1
 \begin{equation*} 
\Vert \utrho - u(t';\rho)\Vert_{a} \leq C'_{a}|t-t'|\Vert u_0\Vert_{a+1}
   \end{equation*}
   with a constant $C'_a$ independent of $\rho$.  Consequently, we see that the family $\big\{\utrho\bigr\}_{\rho \in \mathcal{O}}$ is bounded in $\mathcal{E}_t^0([0,T];B^{a+1})$ and equi-continuous in $\mathcal{E}_t^0([0,T];B^{a})$.  Let $\rho_j \to \rho$
   in $\mathcal{O}$ as $j \to \infty$.  Noting Lemma 4.4, we can apply the Ascoli-Arzel\`{a} theorem to $\big\{u(t;\rho_j)\bigr\}_{j=1}^{\infty}$ in $\mathcal{E}_t^0([0,T];B^{a})$.  Then, there exist a subsequence $\big\{j_k\bigr\}_{k=1}^{\infty}$ and a function $v(t) \in \Eta$ such that $\lim_{k\to \infty}u(t;\rho_{j_k}) = v(t) $ in $\Eta$.  As in the proof of \eqref{4.13} we see that $v(t)$ belongs to $\mathcal{E}_t^1([0,T];B^{a-1})$ and satisfies \eqref{1.1} with $u(0) = u_0$.  The uniqueness of solutions to \eqref{1.1} gives $v(t) = \utrho$, which shows $\lim_{k\to \infty}u(t;\rho_{j_k}) = \utrho$.  Using the uniqueness of solutions to \eqref{1.1} again,  we have
 \begin{equation*} 
\lim_{j \to \infty}u(t;\rho_{j}) = \utrho\ \text{in}\ \Eta.
   \end{equation*}
   Therefore, we see that the mapping $: \mathcal{O} \ni \rho \to \utrho \in \Eta$ is continuous.
   \par
   Now let  $u_0 \in B^a$ and $\utrho\ (\rho \in \mathcal{O})$ the solutions to \eqref{1.1} with $u(0) = u_0$.   We take $\bigl\{u_{0k}\bigr\}_{k=1}^{\infty}$ in $B^{a+1}$ such that $\lim_{k\to \infty}u_{0k} = u_0$ in $B^{a}$ and let $u_k(t;\rho) \in \mathcal{E}_t^0([0,T];B^{a}) \cap \mathcal{E}_t^1([0,T];B^{a-1})$ be the solutions to \eqref{1.1} with $u(0) = u_{0k}$. Then, from \eqref{4.16} we have
 \begin{equation*} 
\sup_{\rho}\max_t \Vert u_k(t;\rho) - u(t;\rho)\Vert_a \leq C_a\Vert u_{0k} - u_{0}\Vert_a,
   \end{equation*}
which shows that $\utrho$ is continuous in $\Eta$ with respect to $\rho \in \mathcal{O}$, because so is $ u_k(t;\rho)$.  Thus, our proof of Theorem 2.2 is complete. \vspace{0.3cm} 
\par
	In the end of this section we will prove Theorem 2.3.  Our proof below is similar to that of Theorem 2.3 in \cite{Ichinose 2012}.
	\par
	Let $u_0 \in B^a\ (\adots)$ and $f(t) \in \Eta$.  Then, we see that under Assumption 2.1 there exists the unique solution $u(t) \in \Eta \cap \mathcal{E}_t^1([0,T];B^{a-1})$ to \eqref{4.13} with $u(0) = u_0$, which satisfies 
 \begin{equation} \label{4.17}
\Vert u(t)\Vert_a \leq C_a\left(\Vert u_{0}\Vert_a + \int_0^t \Vert f(\theta)\Vert_a d\theta\right).
   \end{equation}
   Its proof can be completed by using \eqref{4.14} as in the 3rd step of the proof of Theorem 2.1.
	\par
	Let $u_0 \in B^{a+1}$ and $\utrho \in \mathcal{E}_t^0([0,T];B^{a+1}) \cap \mathcal{E}_t^1([0,T];B^{a})\ (\rho \in \mathcal{O})$ the solutions to \eqref{1.1} with $u(0) = u_0$.  Let $\rho \in \mathcal{O}$ be fixed and $\tau \not= 0$ a small constant such that $\rho + \tau \in \mathcal{O}.$  We set
 \begin{equation} \label{4.18}
w_{\tau} (t;\rho) := \frac{u(t;\rho+\tau) - u(t;\rho)}{\tau},
 \end{equation}
   which belongs to $\Eta \cap \mathcal{E}_t^1([0,T];B^{a-1})$.
   Then, we have $w_{\tau}(0;\rho) = 0$ and from \eqref{1.1}
\begin{equation} \label{4.19}
 i\frac{\partial}{\partial t}w_{\tau} (t;\rho) = H(t;\rho)w_{\tau} (t;\rho) + \int_0^1 \frac{\partial H}{\partial \rho}(t;\rho+\theta\tau)d\theta\, u(t;\rho+\tau).
 \end{equation}
 Hence from \eqref{4.16} and \eqref{4.17} we get
\begin{align*}
& \Vert w_{\tau} (t;\rho)\Vert_a \leq C_a\int_0^t\int_0^1 \left\Vert \frac{\partial H}{\partial\rho}(t';\rho+\theta\tau)u(t';\rho+\tau)\right\Vert_ad\theta dt' \\
& \leq C'_a\int_0^t \left\Vert u(t';\rho+\tau)\right\Vert_{a+1} dt' \leq C''_a \left\Vert u_0\right\Vert_{a+1}.
\end{align*}
Consequently, 
 \begin{equation} \label{4.20}
\sup_{\tau}\Vert w_{\tau} (t;\rho)\Vert_a \leq C_a \left\Vert u_0\right\Vert_{a+1}
\end{equation}
   with another constant $C_a$.
   \par
     We first assume $u_0 \in B^{a+2}$.  From  \eqref{4.20} we have
 \begin{equation*} 
\sup_{\tau}\Vert w_{\tau} (t;\rho)\Vert_{a+1} \leq C_{a+1} \left\Vert u_0\right\Vert_{a+2}.
\end{equation*}
  Thereby from \eqref{4.16} and \eqref{4.19} we have  
   \begin{equation*} 
\sup_{\tau}\Vert w_{\tau} (t;\rho) - w_{\tau} (t';\rho)\Vert_{a} \leq C'_a |t - t'|\left \Vert u_0\right\Vert_{a+2}
\end{equation*}
as in the 1st step of the proof of Theorem 2.1. Hence, we can apply the Ascoli-Arzel\`{a} theorem to $\bigl\{w_{\tau}(t;\rho)\bigr\}_{\tau}$ in $\Eta$.  In addition, we can use  the uniqueness of solutions to \eqref{2.13} or \eqref{4.13}.  Then, using Theorem 2.2, 
as in the 3rd step of the proof of Theorem 2.1 we can prove  from \eqref{4.19} that there exists a function $\wtrho \in \Eta \cap \mathcal{E}_t^1([0,T];B^{a-1})$ satisfying \eqref{2.13} with $w(0) = 0$ and 
 \begin{equation} \label{4.21}
\lim_{\tau\to 0}w_{\tau} (t;\rho) = \wtrho \ \text{in}\ \Eta.
\end{equation}
\par
	Now let  $u_0 \in B^{a+1}$.  Let $\utrho$ be the solution to \eqref{1.1} with $u(0) = u_0$ and  $w_{\tau}(t;\rho)$ be defined by \eqref{4.18}.  We take $\bigl\{u_{0k}\bigr\}_{k=1}^{\infty}$ in $B^{a+2}$ such that $\lim_{k\to \infty}u_{0k} = u_0$ in $B^{a+1}$.  Let $u_k(t;\rho) \in \mathcal{E}_t^0([0,T];B^{a+1}) \cap \mathcal{E}_t^1([0,T];B^{a})$ be the solution to \eqref{1.1} with $u(0) = u_{0k}$.  We define $w_{k\tau}$ and $w_{k}$ by \eqref{4.18} and \eqref{4.21} for $u = u_k$, respectively.  From \eqref{4.19} we have
	\begin{align*}
 & i\frac{\partial}{\partial t}\bigl\{w_{k\tau} (t;\rho) - w_{\tau} (t;\rho)\bigr\}= 
 H(t;\rho)\bigl\{w_{k\tau} (t;\rho) - w_{\tau} (t;\rho)\bigr\} \\
 &\quad  + \int_0^1 \frac{\partial H}{\partial \rho}(t;\rho+\theta\tau)d\theta\, \bigl\{u_k(t;\rho+\tau) - u (t;\rho+\tau)\bigr\}
 \end{align*}
 and $w_{k\tau} - w_{\tau} \in \Eta \cap \mathcal{E}_t^1([0,T];B^{a-1}) $.  Hence, using \eqref{4.17}, from  \eqref{4.16}  we have
 \begin{equation} \label{4.22}
\sup_{\tau}\Vert w_{k\tau}(t;\rho) - w_{\tau}(t;\rho)\Vert_a \leq C_a\Vert u_{0k} - u_{0}\Vert_{a+1}.
   \end{equation}
As noted in the early part of this proof of Theorem 2.3, there exists
 the solution $\wtrho \in \mathcal{E}_t^0([0,T];B^{a}) \cap \mathcal{E}_t^1([0,T];B^{a-1})$  to \eqref{2.13} with $w(0) = 0$ because of $\partial_{\rho}H(t;\rho)u(t;\rho) \in \Eta$.    Then, as in the proof of \eqref{4.22} from \eqref{2.13} we  have
 \begin{equation} \label{4.23}
 \Vert w_{k}(t;\rho) - w(t;\rho)\Vert_a \leq C_a\Vert u_{0k} - u_{0}\Vert_{a+1}
   \end{equation}
with constants $C_a \geq 0$ independent of $\rho \in \mathcal{O}$
because both of $w_k$ and $w$ are the solutions to \eqref{2.13}.
Consequently, we have
\begin{align*}
& \Vert w_{\tau} (t;\rho)- w(t;\rho)\Vert_a \leq \Vert w_{\tau} - w_{k\tau}\Vert_a + \Vert w_{k\tau}- w_k\Vert_a + \Vert w_{k} - w\Vert_a \\
& \leq 2C_a \Vert u_{0k} - u_{0}\Vert_{a+1} + \Vert w_{k\tau}- w_k\Vert_a.
\end{align*}
Therefore, we see from \eqref{4.21} that for any $\epsilon > 0$ we get  $\overline{\lim}_{\tau \to 0}\max_{\,t}\Vert w_{\tau}- w\Vert_a < \epsilon,$ which shows
 \begin{equation} \label{4.24}
\lim_{\tau \to 0}\max_{0 \leq t \leq T}\Vert w_{\tau}(t;\rho) - w(t;\rho)\Vert_a = 0.
   \end{equation}
We also have \eqref{2.12} from \eqref{4.20} and \eqref{4.24}.
\par
	In the end of this proof we will prove that $\wtrho = \partial_{\rho}\utrho$ for $u_0 \in B^{a+1}$ is continuous in $\mathcal{E}_t^0([0,T];B^{a})$ with respect to $\rho \in \mathcal{O}$.  
	We first assume $u_0 \in B^{a+2}$.  Then we have \eqref{4.16} where $a$ is replaced by $a+2$.  Since $\wtrho$ is the solution to \eqref{2.13} with $w(0) = 0$, we see from \eqref{4.17} as in the proof of \eqref{4.21} that the family $\bigl\{\wtrho\bigr\}_{\rho \in \mathcal{O}}$ is bounded in $\mathcal{E}_t^0([0,T];B^{a+1})$ and equi-continuous in $\Eta$. 
Hence, noting that $\utrho$ is continuous in $\mathcal{E}_t^0([0,T];B^{a+2})$  with respect to $\rho$, we see that so is $\wtrho$ in $\Eta$ as in the proof of Theorem 2.2.  Now let $u_0 \in B^{a+1}$.  We take $\bigl\{u_{0k}\bigr\}_{k=1}^{\infty}$ in $B^{a+2}$ such that $\lim_{k\to \infty}u_{0k} = u_0$ in $B^{a+1}$ and write  as $w_k(t,\rho)$ the solutions to \eqref{2.13} with $u(t;\rho) = u_k(t;\rho)$ and $w(0) = 0$.  Then we have \eqref{4.23}, which shows that $\wtrho$ is continuous with respect to $\rho \in \mathcal{O}$ in $\Eta$.  Therefore, our proof of Theorem 2.3 is complete.
%
   \section{Proofs of Theorems 2.4 - 2.6}
	In this section we will study the 4-particle systems \eqref{1.4}.   Let $(x,\xi) \in \bR^{2d}$ and write 
\begin{equation} \label{5.1}
 h_k(t,x,\xi) :=  \frac{1}{2m_k}|\xi - A^{(k)}(t,x)|^2 + V_k(t,x) \ (k = 1,2,3,4)
 \end{equation}
  and 
\begin{equation} \label{5.2}
 l_k(x,\xi) :=  \frac{1}{2m_k}|\xi |^2 + <x>^2 \ (k = 3,4).
 \end{equation}
  We set 
\begin{equation} \label{5.3}
 \tilde{h}(t,z,\zeta) :=  \sum_{k=1}^2 h_k(t,x^{(k)},\xi^{(k)}) + W_{12}(t,x^{(1)} -x^{(2)}) + \sum_{k=3}^4l_k(x^{(k)},\xi^{(k)})
 \end{equation}
 and write 
\begin{equation} \label{5.4}
 \widetilde{H}(t) :=  \widetilde{H}\left(t,\frac{Z+Z'}{2},D_z\right),
 \end{equation}
 where $z = (x^{(1)},x^{(2)},x^{(3)},x^{(4)})$ and $\zeta = (\xi^{(1)},\xi^{(2)},\xi^{(3)},\xi^{(4)})$ in $\bR^{4d}$.   We  also set 
\begin{equation} \label{5.5}
\tilde{h}_s(t,z,\zeta) :=   \tilde{h}(t,z,\zeta) + i\sum_{k=1}^2\frac{1}{2m_k}\nabla\cdot A^{(k)}(t,x^{(k)})
 \end{equation}
 and 
\begin{equation*} 
 p_{\mu}(t,z,\zeta) :=  \frac{1}{\mu + \tilde{h}_s(t,z,\zeta)}
 \end{equation*}
 for large $\mu$ as in \eqref{3.2} and \eqref{3.3}, respectively.
 \begin{lem}
 Assume \eqref{1.2}, \eqref{2.1} and \eqref{2.3} - \eqref{2.4} for $(V_k,A^{(k)})\ (k = 1,2)$ with $M = M_k$ and \eqref{2.14} - \eqref{2.15} for $W_{12}$.  Then, there exist a constant $\mu^* \geq 0$ and functions $r_{\mu}(t,z,\zeta)\ (\mu \geq \mu^*)$ such that
 \begin{equation} \label{5.5-2}
	\mu^* + \rittaire	\, \tilde{h}_s(t,z,\zeta) \geq C_0^*(<\zeta>^2 + \Phi(z)^2),
	\end{equation}
	\begin{equation} \label{5.6}
	\bigl[\mu + \widetilde{H}(t)\bigr] P_{\mu}\tzdz = I + R_{\mu}\tzdz,
	\end{equation}
	\begin{equation} \label{5.7}
	\left|r^{(\alpha)}_{\mu\ (\beta)}(t,z,\zeta)\right| \leq  C_{\alpha\beta}\, \mu^{-1/2}
	\end{equation}
	in $[0,T]\times \bR^{8d}$ for all $\alpha, \beta$ and $\mu \geq \mu^*$ with constants $C_0^* > 0$ and $C_{\alpha\beta}$ independent of $\mu$, where
	\begin{equation} \label{5.8}
	\Phi(z) =  \sum_{k=1}^2<x^{(k)}>^{M_k+1} + \sum_{k=3}^4<x^{(k)}>.
	\end{equation}
 \end{lem}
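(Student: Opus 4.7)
The proof follows the structure of Lemmas 3.1 and 3.2, with the single-particle weight $<x>^{M+1}$ replaced by the block-decomposed weight $\Phi(z)$, and with the pair interaction $W_{12}$ handled by exploiting the slack in the exponents in \eqref{2.14}--\eqref{2.15}. To establish \eqref{5.5-2}, I would first apply the Young's-inequality argument of Lemma 3.1 to each $h_k$ ($k=1,2$), yielding $h_k(t,x^{(k)},\xi^{(k)}) \geq C(<\xi^{(k)}>^2 + <x^{(k)}>^{2(M_k+1)}) - C'$. For $k=3,4$ the bound $l_k(x^{(k)},\xi^{(k)}) \geq (2m_k)^{-1}<\xi^{(k)}>^2 + <x^{(k)}>^2 - 1$ is immediate from \eqref{5.2}. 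For $W_{12}$, the bound \eqref{2.14} together with $<x^{(1)}-x^{(2)}>\leq <x^{(1)}>+<x^{(2)}>$ gives $|W_{12}(t,x^{(1)}-x^{(2)})| \leq C(<x^{(1)}>^{2(M_0+1)-\delta} + <x^{(2)}>^{2(M_0+1)-\delta})$; since $M_0 = \min(M_1,M_2)$, the exponent $2(M_0+1)-\delta$ is strictly below each $2(M_k+1)$ for $k=1,2$, so one more application of Young's inequality absorbs these terms into half of the polynomial contributions of $h_1+h_2$ plus a constant. Summing all contributions and taking $\mu^*$ large enough to swallow the constants gives \eqref{5.5-2}, recalling that $\rittaire \tilde h_s = \tilde h$ by \eqref{5.5}.

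For $\mu\geq\mu^*$ the inequality \eqref{5.5-2} gives $|\mu+\tilde h_s(t,z,\zeta)| \geq C_0^*(\mu-\mu^* + <\zeta>^2 + \Phi(z)^2)$, so $p_\mu$ is well defined. Using \eqref{2.1}, \eqref{2.3}--\eqref{2.4}, \eqref{2.15}, and the explicit form of $l_k$ in \eqref{5.2}, one obtains $|\tilde h_{s\,(\beta)}^{(\alpha)}(t,z,\zeta)| \leq C_{\alpha\beta}(<\zeta>^2 + \Phi(z)^2)$ for all $\alpha,\beta$, together with the sharper bound $|\tilde h_s^{(\alpha)}(t,z,\zeta+\theta\eta)| \leq C(<\zeta>+\Phi(z))<\eta>$ for $|\alpha|\geq 1$ (the analog of \eqref{3.9}). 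In particular, the derivatives of $W_{12}$ are controlled by $|\partial_{x^{(1)}}^\alpha\partial_{x^{(2)}}^\beta W_{12}(t,x^{(1)}-x^{(2)})| \leq C_{\alpha\beta}<x^{(1)}-x^{(2)}>^{2(M_0+1)}$, which is dominated by $<x^{(1)}>^{2(M_1+1)} + <x^{(2)}>^{2(M_2+1)} \leq \Phi(z)^2$ thanks to $2(M_0+1)\leq 2(M_k+1)$ for $k=1,2$. By Leibniz and induction, these give $|p_{\mu\,(\beta)}^{(\alpha)}| \leq C_{\alpha\beta}(<\zeta>^2+\Phi(z)^2)^{-1}$ uniformly in $\mu\geq\mu^*$.

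For \eqref{5.6}--\eqref{5.7}, I would then apply Kumano-go's composition theorem (Theorem 3.1 of Chapter 2 of \cite{Kumano-go}) to the product $\bigl(\mu+\widetilde H(t)\bigr)P_\mu(t,Z,D_z)$ exactly as in Lemma 3.2. Since $\tilde h_s$ is polynomial of degree two in $\zeta$, the Taylor expansion terminates at first order and the remainder takes the same form as \eqref{3.7}:
\begin{equation*}
r_\mu(t,z,\zeta) = \sum_{|\alpha|=1}\int_0^1 d\theta\ \text{Os-}\!\iint e^{-iy\cdot\eta}\,\tilde h_s^{(\alpha)}(t,z,\zeta+\theta\eta)\,p_{\mu\,(\alpha)}(t,z+y,\zeta)\,dy\,\hdbar\eta.
\end{equation*}
Inserting the regularizers $<y>^{-2l_0}<D_\eta>^{2l_0}<\eta>^{-2l_1}<D_y>^{2l_1}$ as in \eqref{3.7} and applying the block-wise translation estimate
\begin{equation*}
\bigl(<\zeta>^2 + \Phi(z+y)^2\bigr)^{-1} \leq C<y>^{2(M_*+1)}\bigl(<\zeta>^2+\Phi(z)^2\bigr)^{-1},\quad M_* := \max(M_1,M_2),
\end{equation*}
one reduces the integrand to an integrable $(y,\eta)$-factor times the one-variable function $\Theta^{1/2}/(\mu-\mu^* + C\Theta)$ with $\Theta := <\zeta>^2 + \Phi(z)^2$. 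Inequality (2.9) of \cite{Ichinose 1995} with $\kappa=1$, $\tau=2$ then gives $|r_\mu|\leq C\mu^{-1/2}$ for all $\mu\geq\mu^*$ (after enlarging $\mu^*$ if necessary). Differentiating the integral representation in $z,\zeta$ before estimating yields \eqref{5.7} for all $\alpha,\beta$.

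The main obstacle will be formulating the translation inequality for $\Phi(z)^2$ in a form compatible with the symbolic calculus, since $\Phi$ couples weights of different polynomial orders to different position variables $x^{(k)}$. One handles this by proving the block-wise estimate $<x^{(k)}+y^{(k)}>^{2(M_k+1)} \geq C<y>^{-2(M_k+1)}<x^{(k)}>^{2(M_k+1)}$ (using the standard $<x> \leq \sqrt{2}<y><x+y>$ applied to each block) and then summing. Once this is in hand, the $W_{12}$-derivative contributions fit automatically into the scheme, and the remainder of the argument is a mechanical transcription of the single-particle proofs of Lemmas 3.1 and 3.2.
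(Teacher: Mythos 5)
Your proposal is correct and follows essentially the same route as the paper's proof: the lower bound \eqref{5.5-2} is obtained exactly as in Lemma 3.1 term by term, with $W_{12}$ absorbed via the $\delta$-slack in \eqref{2.14}, and \eqref{5.6}--\eqref{5.7} follow from the composition formula with the regularized oscillatory-integral remainder, the symbol estimates \eqref{5.11}--\eqref{5.12}, the block-wise translation inequality for $\Phi(z+y)$, and inequality (2.9) of \cite{Ichinose 1995}, just as in Lemma 3.2. The only cosmetic difference is that you spell out details the paper compresses into ``as in the proof of'' references.
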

 \begin{proof}
 As in the proof of  \eqref{3.6} we see
 \begin{equation*} 
  \rittaire \tilde{h}_s(t,z,\zeta) = \tilde{h}(t,z,\zeta) \geq  C_0\bigl(<\zeta>^2  + \Phi(z)^2\bigr) - |W_{12}(t,x^{(1)} -x^{(2)})| - C_1
   \end{equation*}
   with constants $C_0 > 0$ and $C_1 \geq 0$.  Hence, uisng the assumption \eqref{2.14}, we can determine constants $\mu^* \geq 0$ and $C^*_0 > 0$ satisfying \eqref{5.5-2}.  Then, using \eqref{5.5-2},
    as in the proof of \eqref{3.7} for $\mu \geq \mu^*$ we have
   \begin{align} \label{5.10}
  & r_{\mu}(t,z,\zeta)   =  \sum_{|\alpha| = 1} \int_0^1 d\theta\  \text{Os}-\iint e^{-iy\cdot\eta}
 <y>^{-2l_0}  <D_{\eta}>^{2l_0}<\eta>^{-2l_1} <D_y>^{2l_1} \notag\\
 & \hspace{2cm} \cdot \tilde{h}_s^{(\alpha)}(t,z,\zeta+\theta\eta)  p_{\mu (\alpha)}(t,z+y,\xi)dy\hdbar\eta
 \end{align}
  for large integers $l_0$ and $l_1$.  In addition, as in the proofs of \eqref{3.8} - \eqref{3.9} we can show 
 \begin{equation} \label{5.11}
 |\tilde{h}^{(\alpha)}_{s\,(\beta)}(t,z+y,\zeta)| \leq  C_{\alpha\beta} \left(<\zeta>^2 + \Phi(z+y)^2\right) 
 \end{equation}
 for all $\alpha$ and $|\beta|\geq 1$, and 
 \begin{equation} \label{5.12}
 |\tilde{h}^{(\alpha)}_{s\, (\beta)}(t,z,\zeta + \theta\eta)| \leq  C_{\alpha\beta} \bigl(<\zeta> + \Phi(z)\bigr) <\eta>
 \end{equation}
 for $|\alpha|\geq 1$ and all $\beta$.  Therefore, we can complete the proof of Lemma 5.1 from \eqref{5.10} - \eqref{5.12} as in the proof of Lemma 3.2.
 \end{proof}
 	We can easily see from \eqref{5.11} and \eqref{5.12} as in the proof of \eqref{3.25} that under the assumptions of Lemma 5.1 we have
 \begin{equation} \label{5.12-2}
 |\tilde{h}^{(\alpha)}_{s\, (\beta)}(t,z,\zeta)| \leq  C_{\alpha\beta}\bigl(<\zeta>^2 + \Phi(z)^2\bigr) 
 \end{equation}
 for all $\alpha$ and  $\beta$.
 \begin{pro}
 Under the assumptions of Lemma 5.1 there exist a constant $\mu \geq \mu^*$ and a function $w(t,z,\zeta)$ satisfying 
 \begin{equation} \label{5.13}
  |w^{(\alpha)}_{\ \  (\beta)}(t,z,\zeta)| \leq C_{\alpha\beta}\left(<\zeta>^2 + \Phi(z)^2\right)^{-1}
  \end{equation}
  for all $\alpha, \beta$ and 
 \begin{equation} \label{5.14}
 W\tzdz = \bigl(\mu+ \widetilde{H}(t)\bigr)^{-1}.
   \end{equation}
 \end{pro}
 \begin{proof}
If $\mu \geq \mu^*$, from \eqref{5.5-2} and  \eqref{5.11} - \eqref{5.12} we see
 \begin{equation*} 
 |p^{(\alpha)}_{\mu\, (\beta)}(t,z,\zeta)| \leq  C_{\alpha\beta}\bigl(<\zeta>^2 + \Phi(z)^2\bigr)^{-1}
 \end{equation*}
 for all $\alpha$ and  $\beta$ as in the proof of Proposition 3.3. Hence, using Lemma 5.1, we can prove Proposition 5.2 as in the proof of Proposition 3.3.
 \end{proof}
	We take a $\mu$ in Proposition 5.2 and fix it hereafter.  We set
 \begin{equation} \label{5.15}
 \lambda(t,z,\zeta)  := \mu + \tilde{h}_s(t,z,\zeta)
   \end{equation}
   as in \eqref{3.13}.  Then, from \eqref{5.1} - \eqref{5.5} we have 
 \begin{align} \label{5.16}
 \Lambda(t) & = \Lambda\tzdz = \mu + \widetilde{H}_s\tzdz =  \mu + \widetilde{H}(t) \notag\\
 & = \mu + H_1(t) + H_2(t) + W_{12}(t) + L_3(t) + L_4(t),
  \end{align}
   where $H_k(t)$ are the operators defined by \eqref{1.4} and $L_k(t)$ the pseudo-differential operators with symbols $l_k(x^{(k)},\xi^{(k)})$ defined by \eqref{5.2}.  Using the real-valued function $\tilde{h}(t,z,\zeta)$ defined by \eqref{5.3}, we determine
 \begin{equation} \label{5.17}
 \chi_{\epsilon}(t,z,\zeta) = \chi\bigl(\epsilon(\mu + \tilde{h}(t,z,\zeta)\bigr)
   \end{equation}
   with constants $\zeo$ and $\chi \in \mathcal{S}(\bR)$ such that $\chi(0) = 1$ as in \eqref{3.15}.
   \par
   	Lemmas 5.3 and 5.4 below are crucial in this section.
	\begin{lem}
	Under the assumptions of Lemma 5.1 there exist functions  $k_{\epsilon}(t,z,\zeta)\ (\zeo)$ in $[0,T] \times \bR^{8d}$ satisfying
 \begin{equation} \label{5.18}
 \sup_{0 < \epsilon \leq 1}\sup_{t,z,\zeta} |k^{(\alpha)}_{\epsilon\  (\beta)}(t,z,\zeta)| \leq C_{\alpha\beta} < \infty
  \end{equation}
  for all $\alpha, \beta$ and 
 \begin{equation} \label{5.19}
K_{\epsilon}\tzdz = \Bigl[X_{\epsilon}\tzdz, \Lambda\tzdz\Bigr].
   \end{equation}
	\end{lem}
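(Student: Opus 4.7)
The plan is to mimic the proof of Lemma 3.4 by computing the symbol of $\bigl[X_{\epsilon}(t,Z,D_z),\Lambda(t,Z,D_z)\bigr]$ via the pseudo-differential calculus (Theorem 3.1, Chapter 2 of \cite{Kumano-go}) and then estimating each piece uniformly in $\zeo$. Since the constant $\mu$ drops out of the commutator, it is enough to work with $\widetilde{H}_s$ in place of $\Lambda$, and the real/imaginary split $\tilde{h}_s = \tilde{h} + i\sum_{k=1}^2(2m_k)^{-1}\nabla\cdot A^{(k)}$ is exactly what makes the leading term collapse to something manageable.

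First I would write
\begin{align*}
k_{\epsilon}(t,z,\zeta) &= \sum_{|\alpha|=1}\Bigl\{\chi_{\epsilon}^{(\alpha)}\lambda_{(\alpha)}-\lambda^{(\alpha)}\chi_{\epsilon(\alpha)}\Bigr\} \\
&\quad + 2\sum_{|\gamma|=2}\frac{1}{\gamma!}\int_0^1(1-\theta)\,d\theta\,\text{Os}-\iint e^{-iy\cdot\eta}\Bigl\{\chi_{\epsilon}^{(\gamma)}(t,z,\zeta+\theta\eta)\lambda_{(\gamma)}(t,z+y,\zeta)\\
&\qquad\qquad\qquad\qquad\qquad -\lambda^{(\gamma)}(t,z,\zeta+\theta\eta)\chi_{\epsilon(\gamma)}(t,z+y,\zeta)\Bigr\}\,dy\,\hdbar\eta \\
&\equiv I_{1\epsilon}+I_{2\epsilon}.
\end{align*}
For $I_{1\epsilon}$, since $\chi_{\epsilon}=\chi(\epsilon(\mu+\tilde{h}))$ only depends on the real part $\tilde{h}$ of $\tilde{h}_s$, the identities $\chi_{\epsilon}^{(\alpha)}=\epsilon\chi'\cdot\tilde{h}^{(\alpha)}$ and $\chi_{\epsilon(\alpha)}=\epsilon\chi'\cdot\tilde{h}_{(\alpha)}$ give
\[
I_{1\epsilon}=\epsilon\chi'(\epsilon(\mu+\tilde{h}))\sum_{|\alpha|=1}\tilde{h}^{(\alpha)}(\tilde{h}_s-\tilde{h})_{(\alpha)},
\]
because $(\tilde{h}_s-\tilde{h})^{(\alpha)}=0$ for $|\alpha|\geq 1$ (it is $\zeta$-independent). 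Explicitly, as in \eqref{3.19}, this equals
\[
\epsilon\chi'(\epsilon(\mu+\tilde{h}))\sum_{k=1}^{2}\sum_{j=1}^{d}\frac{i}{2m_k^2}\bigl(\xi^{(k)}_j-A^{(k)}_j(t,x^{(k)})\bigr)\partial_{x^{(k)}_j}\nabla\cdot A^{(k)}(t,x^{(k)}).
\]
Then rewriting $\epsilon\chi'(\epsilon(\mu+\tilde{h}))=(\mu+\tilde{h})^{-1}\{\epsilon(\mu+\tilde{h})\chi'(\epsilon(\mu+\tilde{h}))\}$, using Lemma 5.1 (more precisely \eqref{5.5-2}) to bound $(\mu+\tilde{h})^{-1}$ by $C(<\zeta>^2+\Phi(z)^2)^{-1}$, and inserting \eqref{2.3}--\eqref{2.4} for each $A^{(k)}$ ($k=1,2$), I would obtain $\sup_{\epsilon,t,z,\zeta}|I_{1\epsilon}|<\infty$ and, after differentiating and iterating the same argument, the full estimate
\[
\sup_{\zeo}|I_{1\epsilon\,(\beta)}^{(\alpha)}(t,z,\zeta)|\leq C_{\alpha\beta}<\infty
\]
for every $\alpha,\beta$.

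For the remainder $I_{2\epsilon}$, the key inputs are the two-sided bounds on $\chi_{\epsilon}$ derivatives. Exactly as for \eqref{3.21}--\eqref{3.22}, using $\partial_{\xi^{(k)}_j}\chi_{\epsilon}=m_k^{-1}\epsilon\chi'(\xi^{(k)}_j-A^{(k)}_j)$ (for $k=1,2$; similarly without $A^{(k)}$ for $k=3,4$) and the analogous formula with two derivatives, I expect
\[
\sup_{\zeo}|\chi_{\epsilon\,(\beta)}^{(\alpha+\gamma)}(t,z,\zeta)|\leq C_{\alpha\beta}\bigl(<\zeta>^2+\Phi(z)^2\bigr)^{-1},\qquad \sup_{\zeo}|\chi_{\epsilon\,(\beta+\gamma)}^{(\alpha)}(t,z,\zeta)|\leq C_{\alpha\beta}
\]
for $|\gamma|=2$. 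Combining these with the growth estimate \eqref{5.12-2} for $\tilde{h}_s^{(\gamma)}$, $\tilde{h}_{s(\gamma)}$ and performing integration by parts in $y,\eta$ through the weights $<y>^{-2l_0}<D_\eta>^{2l_0}$ and $<\eta>^{-2l_1}<D_y>^{2l_1}$ with $l_0,l_1$ chosen large (so that $2l_0-2\max(M_k+1)>4d$ and $2l_1-1>4d$), the oscillatory integral converges absolutely and the bound $\sup_{\zeo,t,z,\zeta}|I_{2\epsilon\,(\beta)}^{(\alpha)}|\leq C_{\alpha\beta}$ drops out just as in \eqref{3.10}.

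The main obstacle I anticipate is purely bookkeeping: one must choose the integer weights $l_0,l_1$ compatibly with the polynomial exponents $M_k$ of the four different particles (and the cross term $W_{12}$ carrying $2(M_0+1)$), and then verify that the cancellation used to show $(\tilde{h}_s-\tilde{h})^{(\alpha)}=0$ still leaves the $W_{12}$ and $l_k$ contributions under control inside $I_{2\epsilon}$. Once those uniform weights are fixed, the estimates from Lemma 5.1, \eqref{5.12-2}, and Proposition 5.2 combine exactly as in the single-particle proof, yielding \eqref{5.18} and \eqref{5.19}.
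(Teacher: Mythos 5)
Your proposal is correct and follows essentially the same route as the paper: the same Kumano-go expansion into $I_{1\epsilon}+I_{2\epsilon}$, the same cancellation in the first-order term coming from $\tilde{h}_s-\tilde{h}$ being $\zeta$-independent (the paper's \eqref{5.21}), the bound on $\epsilon\chi'$ via $(\mu+\tilde{h})^{-1}$ together with \eqref{5.5-2}, and the analogues of \eqref{3.21}--\eqref{3.22} combined with \eqref{5.11}--\eqref{5.12} for the remainder. The only discrepancy is an inconsequential stray factor of $i$ in your explicit formula for $I_{1\epsilon}$ (it disappears once $(-i\partial_z)^{\alpha}$ is expanded), and your closing remark about choosing $l_0,l_1$ against the exponents $M_k$ and the $W_{12}$ term is exactly the bookkeeping the paper silently performs.
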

	\begin{proof}
	As in the proof of \eqref{3.18} we see
   \begin{align} \label{5.20}
  & k_{\epsilon}(t,z,\zeta) = \sum_{|\alpha| = 1} \Bigl\{\chi_{\epsilon}^{(\alpha)}(t,z,\zeta)\lambda_{(\alpha)}(t,z,\zeta) 
  - \lambda^{(\alpha)}(t,z,\zeta)\chi_{\epsilon(\alpha)}(t,z,\zeta) \Bigr\}\notag\\
 &\quad   + 2 \sum_{|\gamma| = 2}\frac{1}{\gamma\, !}\int_0^1 (1 - \theta)d\theta\  \text{Os}-\iint e^{-iy\cdot\eta}
 \Bigl\{\chi_{\epsilon}^{(\gamma)}(t,z,\zeta+\theta\eta)\lambda_{(\gamma)}(t,z+y,\zeta)  \notag\\
 & \qquad \quad 
  - \lambda^{(\gamma)}(t,z,\zeta+\theta\eta)\chi_{\epsilon(\gamma)}(t,z+y,\xi) \Bigr\}dy\hdbar\eta 
  \equiv I_{1\epsilon} + I_{2\epsilon}.
 \end{align}
From \eqref{5.5}, \eqref{5.15} and \eqref{5.17} we can write
   \begin{align} \label{5.21}
  & I_{1\epsilon}(t,z,\zeta) = \epsilon\chi\,'(\epsilon(\mu +\tilde{h}))\sum_{|\alpha| = 1}\Bigl\{\tilde{h}^{(\alpha)}\tilde{h}_{s(\alpha)}
  - \tilde{h}^{(\alpha)}_{s}\tilde{h}_{(\alpha)} \Bigr\}\notag\\
 & = i \epsilon\chi\,'(\epsilon(\mu +\tilde{h}))\sum_{|\alpha| = 1}
 \tilde{h}^{(\alpha)}_s(t,z,\zeta)\sum_{k=1}^2 \frac{1}{2m_k}(-i\partial_z)^{\alpha}\,\nabla\cdot A^{(k)}\bigl(t,x^{(k)},\xi^{(k)}\bigr).
 \end{align}
   From \eqref{5.5-2} we have
 \begin{equation} \label{5.21-2}
 \bigl (\mu + \tilde{h}(t,z,\zeta)\bigr)^{-1} \leq  C_0\bigl(<\zeta>^2  + \Phi(z)^2\bigr)^{-1}
   \end{equation}
 because of $\tilde{h} = \rittaire \tilde{h}_s$.  Hence, from \eqref{2.4}, \eqref{5.12} and \eqref{5.21} - \eqref{5.21-2} we can prove $\sup_{\epsilon}\sup_{t,z,\zeta} 
   |I_{1\epsilon}| < \infty$ as in the proof of \eqref{3.20}.  In the same way we can prove
 \begin{equation} \label{5.22}
 \sup_{0 < \epsilon \leq 1}\sup_{t,z,\zeta} |I^{(\alpha)}_{1\epsilon\  (\beta)}(t,z,\zeta)| \leq C_{\alpha\beta} < \infty
  \end{equation}
for all $\alpha$ and $\beta$.  \par
   Let $|\gamma| = 2$.  Then, from \eqref{5.5} and \eqref{5.11} - \eqref{5.12} we have the similar inequalities
 \begin{equation*} 
\sup_{\zeo}  |\chi^{(\alpha+\gamma)}_{\epsilon\  (\beta)}(t,z,\zeta)| \leq C_{\alpha\beta}\left(<\zeta>^2 + \Phi(z)^2\right)^{-1}
  \end{equation*}
  and 
 \begin{equation*} 
\sup_{\zeo}  |\chi^{(\alpha)}_{\epsilon\ (\beta+\gamma)}(t,z,\zeta)| \leq C_{\alpha\beta} < \infty
  \end{equation*}
  to \eqref{3.21} and \eqref{3.22} for all $\alpha$ and $\beta$, respectively.  Consequently, noting that $\lambda^{(\gamma)}(t,z,\zeta) = \tilde{h}_s^{(\gamma)}(t,z,\zeta)$ are constants, from \eqref{5.20} we can prove
 \begin{equation*} 
 \sup_{0 < \epsilon \leq 1}\sup_{t,z,\zeta} |I^{(\alpha)}_{2\epsilon\  (\beta)}(t,z,\zeta)| \leq C_{\alpha\beta} < \infty
  \end{equation*}
for all $\alpha$ and $\beta$ as in the proof of Lemma 3.4, which completes the proof together with \eqref{5.20} and \eqref{5.22}.
	\end{proof}
Let $H(t)$ be the operator defined by \eqref{1.4}.
\begin{lem}
Besides the assumptions of Lemma 5.1 we suppose that each $(V_k,A^{(k)})\ (k = 3,4)$ satisfies \eqref{1.6} - \eqref{1.7} and each 
$W_{ij}(t,x)\ (1 \leq i <j \leq 4)$ except $W_{12}$ satisfies \eqref{1.6}.  Then,  there exists a function $\tilde{q}(t,z,\zeta)$  satisfying
 \begin{equation} \label{5.23}
 \sup_{t,z,\zeta} |\tilde{q}^{\,(\alpha)}_{\ \,(\beta)}(t,z,\zeta)| \leq C_{\alpha\beta} < \infty
  \end{equation}
  for all $\alpha, \beta$ and 
 \begin{equation} \label{5.24}
\widetilde{Q}\tzdz = \Bigl[\Lambda(t), H(t)\Bigr]\Lambda(t)^{-1}.
   \end{equation}
\end{lem}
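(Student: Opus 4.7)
The approach is to reduce the commutator $[\Lambda(t), H(t)]$ to one involving only the ``small'' part of $H(t)$, and then invoke the pseudo-differential composition calculus together with Proposition 5.2.

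Since $\Lambda(t) = \mu + \widetilde{H}(t)$ by \eqref{5.16}, the identity $[\Lambda(t), \widetilde{H}(t)] = 0$ holds trivially, so
\[
[\Lambda(t), H(t)] = [\Lambda(t), H(t) - \widetilde{H}(t)].
\]
Comparing \eqref{1.4} with \eqref{5.16} gives the decomposition
\[
H(t) - \widetilde{H}(t) = \sum_{k=3}^{4}\bigl(H_k(t) - L_k(t)\bigr) + \sum_{\substack{1 \leq i < j \leq 4 \\ (i,j) \neq (1,2)}} W_{ij}\bigl(t, x^{(i)} - x^{(j)}\bigr),
\]
and under \eqref{1.6}-\eqref{1.7} applied to $(V_k, A^{(k)})$ with $k=3,4$ and \eqref{1.6} applied to the remaining $W_{ij}$, the Weyl symbol of each summand and all its $\alpha, \beta$-derivatives are bounded by a constant multiple of $<\zeta>^2 + \Phi(z)^2$.

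I would then apply Theorem 3.1 of Chapter 2 of \cite{Kumano-go} to each commutator $[\Lambda(t), H_k(t) - L_k(t)]$ and $[\Lambda(t), W_{ij}]$, obtaining a double-symbol representation with a leading Poisson-bracket term plus a second-order oscillatory-integral remainder, in the spirit of \eqref{5.20}. Combined with \eqref{5.12-2} for the derivatives of $\tilde{h}_s$, the full symbol of $[\Lambda(t), H(t)]$ and all its derivatives are then pointwise dominated, uniformly in $t$, by a constant multiple of $<\zeta>^2 + \Phi(z)^2$. A second application of the composition calculus, multiplying by the symbol $w(t,z,\zeta)$ of $\Lambda(t)^{-1}$ which satisfies \eqref{5.13} by Proposition 5.2, produces the desired symbol $\tilde{q}(t,z,\zeta)$ obeying \eqref{5.23}; the operator identity \eqref{5.24} follows from \eqref{5.14}.

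The main obstacle is the bookkeeping of the second-order remainders in the two successive Kumano-go expansions: one has to check that the relevant oscillatory integrals, analogous to those appearing in the proofs of \eqref{5.10} and \eqref{5.20}, can after integration by parts in the dual variables be estimated by $C(<\zeta>^2 + \Phi(z)^2)$ uniformly in $t$, so that multiplication by $w(t,z,\zeta)$ yields the final uniformly bounded symbol. This is essentially the same calculation as in the proofs of Lemmas 3.4, 5.3 and Proposition 5.2; the novelty is only that no cutoff $\chi_\epsilon$ is needed, because the factor $\lambda^{-1}$ alone supplies the required ellipticity.
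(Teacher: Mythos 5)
Your proposal is correct and follows essentially the same route as the paper: both exploit $[\Lambda(t),\widetilde{H}(t)]=0$ to reduce the commutator to the perturbation $H(t)-\widetilde{H}(t)$ (the paper writes this reduction as \eqref{5.26}), bound each resulting piece by the symbolic calculus, and then compose with $\Lambda(t)^{-1}$ via Proposition 5.2 --- the paper handling $[H_k,L_k]\Lambda^{-1}$ $(k=3,4)$ by citing Lemma 3.1 of \cite{Ichinose 1995} and handling $[W_{13},H_1]$ by the explicit symbol \eqref{5.27} with the bound \eqref{5.28}. One small caution: the crude estimate \eqref{5.12-2} alone is too coarse to control the Poisson-bracket term (it would bound a product of two first derivatives only by $(<\zeta>^2+\Phi(z)^2)^2$); what is actually needed is the first-order refinement \eqref{5.12} for the $\zeta$-derivatives of $\lambda$, together with the fact that the perturbation symbols depend only on single blocks of variables and have first derivatives of order at most $<\zeta>+\Phi(z)$, which is precisely what the paper's explicit computations verify.
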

\begin{proof}
We write $H(t)$   as
 \begin{equation} \label{5.25}
 H(t) = \sum_{k=1}^4H_k(t) + W_{12}(t) + \sum \,'\,W_{ij}(t).
   \end{equation}
  Then from \eqref{5.16} we see
 \begin{align} \label{5.26}
&[H(t), \Lambda(t)] = [(H_1 + H_2 + W_{12}) + H_3 + H_4 + \sum\, '\,W_{ij}, \notag\\
& (H_1 + H_2 + W_{12}) + L_3 + L_4] = [H_3,L_3] + [H_4,L_4] + \notag\\
& \left[\sum\, '\,W_{ij}, H_1 + H_2 + L_3 + L_4\right].
   \end{align}
  Lemma 3.1 in \cite{Ichinose 1995} has showed that each of $[H_k,L_k] \Lambda^{-1}\ (k = 3,4) $ is written as the pseudo-differential operator with the symbol satisfying \eqref{5.23}.
  \par
  We can easily see that $m_1\bigl[W_{13}(t),H_1(t)\bigr]$ is written as the pseudo-differential operator with the symbol
 \begin{align} \label{5.27}
& \tilde{q}_1(t,z,\zeta)  = i\frac{\partial W_{13}}{\partial x} (t,x^{(1)} - x^{(3)})\cdot \xi^{(1)} + \frac{1}{2}\Delta_xW_{13} (t,x^{(1)} - x^{(3)}) \notag \\
& - i A^{(1)}(t,x^{(1)})\cdot \frac{\partial W_{13}}{\partial x} (t,x^{(1)} - x^{(3)}).
  \end{align}
  Hence, from the assumptions we have
 \begin{align*} 
& |\tilde{q}_1(t,z,\zeta)| \leq C_1\bigl(<\xi^{(1)}>^2 + <x^{(1)} - x^{(3)}>^2 + <x^{(1)}>^{M_1+1}<x^{(1)} - x^{(3)}>\bigr)
\notag \\
& \leq C_2\bigl(<\xi^{(1)}>^2 + <x^{(1)}>^{M_1+2} + <x^{(1)}>^{2(M_1+1)}+ < x^{(3)}>^2\bigr) \\
& \leq C_3\bigl(<\xi^{(1)}>^2 + <x^{(1)}>^{2(M_1+1)}+ < x^{(3)}>^2\bigr).
  \end{align*}
  In the same way we have
 \begin{equation} \label{5.28}
 |\tilde{q}^{\,(\alpha)}_{1 \,(\beta)}(t,z,\zeta)| \leq C_{\alpha\beta} \bigl(<\zeta>^2 + \Phi(z)^2 \bigr)
  \end{equation}
  for all $\alpha$ and $ \beta$.  Consequently, by Proposition 5.2 we see that $\bigl[W_{13}(t),H_1(t)\bigr]\Lambda(t)^{-1}$ is written as the pseudo-differential operator with the symbol satisfying \eqref{5.23}.  In the same way  we can complete the proof of Proposition 5.4.
\end{proof}
	Using the function $\chi_{\epsilon}(t,z,\zeta)$ defined by \eqref{5.17},  we define 
 \begin{equation} \label{5.29}
H_{\epsilon}(t) := X_{\epsilon}\tzdz^{\dagger}H(t)X_{\epsilon}\tzdz
   \end{equation}
as in \eqref{4.1}.
\begin{lem}
Under Assumption 2.2 
 there exist functions $\qepsilon(t,z,\zeta)\ (\zeo)$ satisfying \eqref{5.18} and 
 \begin{align} \label{5.30}
Q_{\epsilon}\tzdz = &\Bigl[\Lambda\tzdz, H_{\epsilon}(t)\Bigr]\Lambda\tzdz^{-1} \notag \\
&  + i\frac{\partial\Lambda}{\partial t}\tzdz\Lambda\tzdz^{-1}.
   \end{align}
\end{lem}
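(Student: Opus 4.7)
The plan is to follow the structure of the proof of Lemma 4.1, but with the crucial modification that $\Lambda(t) = \mu + \widetilde{H}(t) \ne \mu + H(t)$ in the multi-particle setting. Since $\widetilde{H}(t)$ contains the simpler operators $L_3(t) + L_4(t)$ in place of $H_3(t) + H_4(t) + \sum\,' W_{ij}(t)$, the commutator $[\Lambda(t), H(t)]$ no longer vanishes; this is precisely the discrepancy quantified by Lemma 5.4.

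First I would decompose the main commutator by the Leibniz rule for commutators:
\begin{align*}
\bigl[\Lambda(t), H_\epsilon(t)\bigr] &= \bigl[\Lambda(t), X_\epsilon(t)^\dagger\bigr] H(t) X_\epsilon(t) + X_\epsilon(t)^\dagger \bigl[\Lambda(t), H(t)\bigr] X_\epsilon(t) \\
&\quad + X_\epsilon(t)^\dagger H(t) \bigl[\Lambda(t), X_\epsilon(t)\bigr].
\end{align*}
Using $\Lambda(t)^\dagger = \Lambda(t)$, the first and third terms can be handled exactly as in the derivation of \eqref{4.4}, namely by writing $[\Lambda, X_\epsilon^\dagger] = -[\Lambda, X_\epsilon]^\dagger$ and invoking Lemma 5.3 to express $[\Lambda(t), X_\epsilon(t)]$ as a pseudo-differential operator $K_\epsilon(t,Z,D_z)$ whose symbol obeys the uniform estimate \eqref{5.18}.

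The new ingredient is the middle term $X_\epsilon(t)^\dagger [\Lambda(t), H(t)] X_\epsilon(t) \Lambda(t)^{-1}$. Writing this as
\[
X_\epsilon(t)^\dagger \bigl\{ [\Lambda(t), H(t)] \Lambda(t)^{-1} \bigr\} \Lambda(t) X_\epsilon(t) \Lambda(t)^{-1},
\]
I would insert Lemma 5.4 to replace $[\Lambda(t), H(t)] \Lambda(t)^{-1}$ by a pseudo-differential operator whose symbol $\tilde{q}(t,z,\zeta)$ satisfies \eqref{5.23}. The factor $\Lambda(t) X_\epsilon(t) \Lambda(t)^{-1}$ is a composition of a good symbol ($\chi_\epsilon$) sandwiched between $\Lambda(t)$ and its inverse; using Proposition 5.2, Lemma 5.3 and the symbol-calculus composition (Theorem 2.5 in Chapter 2 of Kumano-go), this is a pseudo-differential operator with symbol uniformly bounded together with all its derivatives in $0 < \epsilon \le 1$. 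Likewise, the operators $H(t) X_\epsilon(t) \Lambda(t)^{-1}$ appearing in the first and third terms are compositions whose symbols are controlled via \eqref{5.12-2} and Proposition 5.2, yielding uniformly bounded symbols after multiplication by $\Lambda(t)^{-1}$.

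Finally, the additional term $i (\partial_t \Lambda)(t,Z,D_z) \Lambda(t,Z,D_z)^{-1}$ is straightforward: by \eqref{5.15} and \eqref{5.5}, $\partial_t \tilde{h}_s$ satisfies the same growth bound \eqref{5.12-2} as $\tilde{h}_s$ (using \eqref{2.2}, \eqref{2.5} for $k=1,2$ and the analogous bounds built into assumption (2) of Assumption 2.2 for $k=3,4$, together with $\partial_t W_{12}$), and composing with $\Lambda(t)^{-1}$ via Proposition 5.2 yields a symbol in the uniform class. Assembling all four pieces and invoking the composition calculus once more produces the desired symbol $q_\epsilon(t,z,\zeta)$ satisfying \eqref{5.18} and the identity \eqref{5.30}. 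The main obstacle is accounting cleanly for the non-vanishing commutator $[\Lambda(t), H(t)]$; everything else is a multi-particle reprise of the one-particle argument, and Lemma 5.4 is precisely the device that absorbs this obstruction.
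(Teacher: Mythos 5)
Your proposal is correct and follows essentially the same route as the paper: the same three-term commutator decomposition as in Lemma 4.1, with Lemma 5.3 handling $[\Lambda,X_\epsilon]$, Lemma 5.4 absorbing the now non-vanishing middle term $X_\epsilon^\dagger[\Lambda,H]X_\epsilon$, and Proposition 5.2 together with the symbol calculus controlling the compositions and the $i(\partial_t\Lambda)\Lambda^{-1}$ term. You correctly identify the non-vanishing of $[\Lambda(t),H(t)]$ as the sole new feature, which is exactly the point of the paper's (much terser) proof.
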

\begin{proof}
From \eqref{5.29} and $\Lambda(t)^{\dagger} = \Lambda(t)$ we have
 \begin{align*}
 & \Bigl[\Lambda(t), H_{\epsilon}(t)\Bigr] = -\Bigl[\Lambda(t), X_{\epsilon}(t)\Bigr]^{\dagger}H(t)X_{\epsilon}(t) \\
 &\quad  + X_{\epsilon}(t)^{\dagger}\Bigl[\Lambda(t), H(t)\Bigr]X_{\epsilon}(t) + 
 X_{\epsilon}(t)^{\dagger}H(t)\Bigl[\Lambda(t), X_{\epsilon}(t)\Bigr]
 \end{align*}
 as in the proof of Lemma 4.1.  Let's apply Proposition 5.2 and Lemmas 5.3 - 5.4 to the above and apply Proposition 5.2 to $(i\partial \Lambda(t)/\partial t)\Lambda(t)^{-1}$.  Then, we can prove Lemma 5.5 as in the proof of Lemma 4.1.
\end{proof}
	Using Lemma 5.5, we can prove the following as in the proof of Proposition 4.2.
	\begin{pro}
Under Assumption 2.2 there exist functions $q_{a\epsilon}(t,z,\zeta)\ (\adots, \zeo)$ satisfying \eqref{5.18} and 
 \begin{equation} \label{5.31}
Q_{a\epsilon}\tzdz = \Biggl[i\frac{\partial}{\partial t}- H_{\epsilon}(t), \Lambda(t)^a\Biggr]\Lambda(t)^{-a}.
   \end{equation}
	\end{pro}
		Let $B'^a(\bR^{4d})\ (\adots)$ be the weighted Sobolev spaces introduced in \S 2.  Then we see as in Lemma 4.4 that the embedding map from $B'^{a+1}$ into $B'^{a}$ is compact.  We also get the similar result to Proposition 3.5 from Proposition 5.2.  Therefore, using Proposition 5.6, we can prove Theorems 2.4 - 2.6 as in the proofs of Theorems 2.1 - 2.3, respectively.	
 %

\end{document}